\newtheorem{theorem}{Theorem}
\newtheorem{prop}[theorem]{Proposition}
\newtheorem{lemma}[theorem]{Lemma}
\newtheorem{claim}{Claim}
\newtheorem*{cor}{Corollary}
\theoremstyle{definition}
\newtheorem*{definition}{Definition}
\newtheorem{remark}{Remark}
\numberwithin{equation}{section}
\newcommand{\Z}{{\mathbb Z}}
\newcommand{\C}{{\mathbb C}}
\newcommand{\Q}{{\mathbb Q}}
\newcommand{\R}{{\mathbb R}}
\newcommand{\HH}{{\mathbb H}}
\def\wt{\widetilde }
\newcommand{\rk}{\mathop{\mathrm{rank}}\nolimits}
\newcommand{\PrimQ}{{\text{\rm Cob}^{Sp}\text{\rm Prim}}}
\newcommand{\PrimSO}{{\text{\rm Cob}^{SO}\text{\rm Prim}}}
\newcommand{\Prim}[1]{{\text{\rm CobPrim}\Sigma^{1_{#1}}}}
\newcommand{\CobQ}{{\text{\rm Cob}^{Sp}}}
\newcommand{\CobSO}{{\text{\rm Cob}^{SO}}}
\newcommand{\Cob}[1]{{\text{\rm Cob}\Sigma^{1_{#1}}}}
\newcommand{\Sig}[1]{{\Sigma^{1_{#1}}}}
\newcommand{\up}{{\uparrow}}
\begin{document}

\title{Classifying spaces for projections of immersions with controlled singularities}

\renewcommand{\thefootnote}{}

\author{Andr\'as Sz\H{u}cs, Tam\'as Terpai}

\date{}

\maketitle

\footnotetext{Acknowledgement: The authors were supported by the National Research, Development and Innovation Office NKFIH (OTKA) Grant NK 112735. TT was also supported by the National Research, Development and Innovation Office NKFIH (OTKA) Grant K 120697.}


\renewcommand{\thefootnote}{\arabic{footnote}}
\setcounter{footnote}{0}

\begin{abstract}
We give an explicit simple construction for classifying spaces of maps obtained as hyperplane projections of immersions. We prove structure theorems for these classifying spaces.
\end{abstract}

\section{Introduction}

\begin{definition}
Let $M^n$ and $P^{n+k}$ be smooth manifolds and $f:M^n \to P^{n+k}$ a smooth ($\mathcal C^\infty$-) map. $f$ is called a \emph{corank $1$ map} if $\rk df_x \geq n-1$ for all $x \in M^n$. A stable corank $1$ map is called a \emph{Morin map}.
\end{definition}

\begin{definition}
Given a Morin map $f$ we say that $x\in M^n$ is a \emph{$\Sigma^{1_r,0}$-point} if there exists a regular curve $\gamma: (\R,0) \to (M,x)$ going through $x$ that has $\frac{\partial^r(f\circ \gamma)}{\partial t^r}(0)=0$, but no such regular curve satisfies $\frac{\partial^{r+1}(f\circ \gamma)}{\partial t^{r+1}}(0)=0$.
\end{definition}
Morin \cite{Morin} showed that for a fix $r$ all $\Sigma^{1_r,0}$ germs are left-right equivalent ($\mathcal A$-equivalent) and that for $r \neq s$ the $\Sigma^{1_r,0}$ germs are not equivalent to $\Sigma^{1_s,0}$ germs.

\begin{definition}
Given a Morin map $f: M^n \to P^{n+k}$ we denote by $\Sigma(f)$ the set of its singular points and we denote by $\Sigma^{1_r,0}(f)$ the set of its $\Sigma^{1_r,0}$-points.
\end{definition}

\begin{definition}
A Morin map is called a \emph{$\Sig{r}$-map} if it has no $\Sigma^{1_s,0}$-points with $s>r$.
\end{definition}

\begin{definition}
A corank $1$ map $f:M^n \to P^{n+k}$ equipped with a trivialization of its kernel line bundle is called a \emph{prim} map.
\end{definition}

Note that a prim map is the composition of an immersion $g:M^n \looparrowright P^{n+k} \times \R^1$ with the standard projection $pr:P^{n+k} \times \R^1 \to P^{n+k}$.\footnote{The word \emph{prim} is an abbreviation of ``\emph{pr}ojection of \emph{im}mersion''.}


We denote by $\Prim r(P)$ the cobordism group of prim $\Sig{r}$-maps in a fixed target manifold $P$, and we denote by $\Cob r(P)$ the cobordism group of all $\Sig{r}$-maps in a fixed target manifold $P$. For the standard definitions of these groups see \cite{GT}. Analogous groups can be defined for the case of cooriented maps or maps with a quaternionic normal structure; we denote them by $\PrimSO\Sig r(P)$, $\CobSO\Sig r(P)$ and $\PrimQ\Sig r (P)$, $\CobQ\Sig r(P)$, respectively.

Whenever there is a (prim) $\Sig{r}$-map $f:M\to P$ and a smooth map $g:P' \to P$ a standard pullback diagram arises:
$$
\xymatrix{
M' \ar@{-->}[r]^{g^*f} \ar@{-->}[d] & P' \ar[d]^{g} \\
M \ar[r]_f & P
}
$$
If the map $g$ is transverse to all the submanifolds $f\left(\Sigma^{1_s,0}(f)\right)$ for $s=0,\dots,r$, then the map $g^*f$ is a (prim) $\Sig{r}$-map as well. If the map $g$ is not transverse to the submanifolds $\Sigma^{1_s}(f)$, one can still choose an approximating map $\tilde g$ close to $g$ that is transverse and obtain a (prim) $\Sigma^{1_r}$-map $\tilde g^*f$; this map is not unique, but any two choices of such approximating maps --- $\tilde g_1$ and $\tilde g_2$, say --- can be deformed into one another by a homotopy $G$ that is itself transverse to the submanifolds $\Sigma^{1_s}(f)$, hence $\tilde g_1^*f$ and $\tilde g_2^*f$ are connected by the cobordism $G^*f$ and represent the same element in the (prim) $\Sigma^{1_r}$-cobordism group of $P$. It is easy to check that sending $f$ to the pullback map $f\mapsto \tilde g^*f$ induced by a suitable approximation of $g$ on the cobordism group of $P$  we obtain a contravariant functor from the category of smooth manifolds and smooth maps to the category of groups and homomorphisms. A similar functor can be defined using $\Prim r(\cdot)$ instead of $\Cob r(\cdot)$.

There exist (homotopically unique) spaces $X_r$ and $\overline{X}_r$ that represent the functors
\[
\aligned
P &\longrightarrow \Cob r(P) \ \text{ and}\\
P &\longrightarrow \Prim r(P)
\endaligned
\]
in the sense of Brown representability theorem\footnote{In order to apply Brown's theorem directly, one has to extend these functors to arbitrary simplicial complexes (not only manifolds). This is done in \cite{GT}.} (see \cite{Switzer}), in particular
\[
\aligned
\Cob r(P) &= [P,  X_r]\ \text{ and}\\
\Prim r(P) &= [P, \overline X_r]
\endaligned
\]
for any compact manifold $P$.
We call the spaces $X_r$ and $\overline X_r$ the \emph{classifying spaces} for $\Sigma^{1_r}$-maps and prim $\Sigma^{1_r}$-maps, respectively. This type of classifying spaces in a more general setup has been explicitly constructed and investigated earlier, see \cite{Sbornik}, \cite{LNM}; in \cite{RSz} and \cite{GT} two significantly different explicit descriptions of the classifying space for a more general class of singular maps are given, and in \cite{gluing} a homotopy theoretic connection between those constructions is established. Again, analogues for oriented maps and quaternionic maps can be defined, we denote them by $X^{\rm SO}_r$, $\overline X^{\rm SO}_r$, $X^{\rm Sp}_r$ and $\overline X^{\rm Sp}_r$; in what follows, we will omit the distinguishing upper indices when the argument works for each case.

In the present paper we give a simple construction for the spaces $\overline X_r$ and prove structure theorems for them. As a byproduct we get an explicit description of some elements of stable homotopy groups of spheres via local forms of Morin maps.

\section{Construction of $\overline{X_r}$, the classifying space of co\-bor\-disms of prim $\Sigma^{1_r}$-maps}

{\bf Notation:} Let $\gamma^{\rm O}_{k+1} \to BO(k+1)$ denote the universal $(k+1)$-dimensional vector bundle, let
$\gamma^{\rm SO}_{k+1} \to BSO(k+1)$ denote the universal $(k+1)$-dimensio\-nal oriented vector bundle and denote by
$\gamma^{\rm Sp}_{k+1} \to BSp(k+1)$ the universal $4(k+1)$-dimensional quaternionic vector bundle. We denote by $\gamma_{k+1}$ one of these bundles, with the implication that our arguments apply to each case. Let $S=S\left((r+1)\gamma_{k+1}\right)$ be the sphere bundle of $(r+1)\gamma_{k+1} = \overbrace{\gamma_{k+1} \oplus \cdots \oplus \gamma_{k+1}}^{r+1}$, with $pr_S:S \to B(k+1)$ denoting the projection on $B(k+1)$, which is either $BO(k+1)$, $BSO(k+1)$, or $BSp(k+1)$. Define the bundle $\zeta_S$ to be the pullback $pr_S^*\gamma^{k+1}$. The Thom space of $\zeta_S$ will be denoted by $T\zeta_S$, and $\Omega\Gamma T\zeta_S$ is the space $\Omega^{\infty+1}S^\infty T\zeta_S = \underset{q\to\infty}{\lim} \Omega^{q+1}S^q T\zeta_S$.

\begin{theorem}\label{thm:classic}
$\overline{X_r}=\Omega \Gamma T\zeta_S$.
\end{theorem}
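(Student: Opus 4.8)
The plan is to prove the theorem by a Pontryagin--Thom argument: I will identify a prim $\Sig r$-map into $P$ with an immersion into $P\times\R$ carrying a specific extra tangential structure, so that the cobordism group $\Prim r(P)$ becomes an immersion cobordism group, to which the Pontryagin--Thom (Wells) classification of immersions with prescribed normal structure applies directly.

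First I would set up the geometric dictionary. Given a prim $\Sig r$-map $f\colon M^n\to P^{n+k}$, write it as $pr\circ g$ with $g\colon M^n\looparrowright P^{n+k}\times\R$ an immersion; its normal bundle $\nu_g$ has rank $k+1$ and carries the appropriate $O$-, $SO$- or $Sp$-structure, hence is classified by a map $c\colon M\to B(k+1)$. Fixing Riemannian metrics (a contractible choice), the vertical unit field on $P\times\R$ and the covariant derivatives of $g$ along the fibres of $pr$ produce a canonical sequence of sections $v_1,\dots,v_{r+1}$ of $\nu_g$: $v_1$ is the orthogonal projection of the vertical direction to $\nu_g$, whose zero set is exactly $\Sigma(f)$, and $v_{i+1}$ is the next covariant derivative along the kernel curve. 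These assemble to a section $(v_1,\dots,v_{r+1})$ of $(r+1)\nu_g$, on which a change of normal frame acts diagonally. The step I expect to be the main obstacle is the local claim: $f$ is a $\Sig r$-map, i.e. has no $\Sigma^{1_s,0}$-points with $s>r$, if and only if this section is nowhere zero. Proving it requires a careful analysis of the prim Morin germs (using Morin's normal forms, \cite{Morin}), checking that at a $\Sigma^{1_s,0}$-point precisely the first $s$ of the $v_i$ vanish while $v_{s+1}\neq0$, so that the section vanishes exactly at $\Sigma^{1_{\geq r+1},0}$-points, and that the construction is natural in $P$ and independent of the auxiliary choices up to contractible ambiguity.

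Granting the dictionary, a nowhere-zero section of $(r+1)\nu_g$ is the same as a lift $\tilde c\colon M\to S=S\bigl((r+1)\gamma_{k+1}\bigr)$ of $c$, under which $\nu_g=\tilde c^*\zeta_S$. Thus a prim $\Sig r$-map into $P$ is exactly the datum of an immersion $M^n\looparrowright P^{n+k}\times\R$ whose normal bundle is induced from $\zeta_S\to S$; moreover a cobordism of prim $\Sig r$-maps is a prim $\Sig r$-map into $P\times[0,1]$, which lifts to a cobordism of such immersions into $(P\times[0,1])\times\R$, and conversely, so the corresponding cobordism groups agree.

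Finally I would invoke the Pontryagin--Thom classification of immersions with prescribed normal structure (cf. \cite{GT}): the cobordism group of immersions into a manifold $Q$ with normal bundle induced from $\zeta_S\to S$ is $[Q^+,\Gamma T\zeta_S]$. Taking $Q=P^{n+k}\times\R$ and using $(P\times\R)^+=\Sigma(P^+)$ together with the loop--suspension adjunction gives
\[
\Prim r(P)=[\Sigma(P^+),\Gamma T\zeta_S]=[P^+,\Omega\Gamma T\zeta_S]
\]
naturally in $P$, so $\Omega\Gamma T\zeta_S$ represents the functor $\Prim r(\cdot)$ and hence $\overline{X_r}=\Omega\Gamma T\zeta_S$ by uniqueness of classifying spaces. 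For completeness I would exhibit the two maps of the correspondence explicitly: from a prim $\Sig r$-map, embed $M$ into $P\times\R\times\R^N$ for large $N$, collapse onto a tubular neighbourhood and use $\tilde c$ to land in $\Sigma^N T\zeta_S$; conversely, take a representing map $\Sigma^{N+1}P^+\to\Sigma^N T\zeta_S$, make it transverse to the zero section $S$ (working over finite skeleta of $B(k+1)$), take the preimage $M^n$, project it to $P\times\R$ to obtain an immersion and then to $P$, and use a transversality/perturbation argument to see that the result is a prim $\Sig r$-map whose associated structure map is $\tilde c$. That these two constructions respect cobordism and are mutually inverse is then routine once the dictionary of the second step is established.
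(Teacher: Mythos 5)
Your construction of the canonical sections $v_1,\dots,v_{r+1}$ of $\nu_g$ from the vertical field, and the resulting lift $\tilde c\colon M\to S\bigl((r+1)\gamma_{k+1}\bigr)$ with $\nu_g=\tilde c^*\zeta_S$, is exactly the first half of the paper's argument (there the sections are called $s_1,\dots,s_{r+1}$ and the lift $\wt{\alpha_r}$), and it yields, via Pontryagin--Thom, the natural transformation $\Prim r(P)\to[P,\Omega\Gamma T\zeta_S]$ and hence a map $\theta_r\colon\overline X_r\to\Omega\Gamma T\zeta_S$. Up to this point you and the paper agree.

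The gap is in your sentence ``Thus a prim $\Sig r$-map into $P$ is \emph{exactly} the datum of an immersion $M\looparrowright P\times\R$ whose normal bundle is induced from $\zeta_S\to S$.'' Only one direction of this dictionary has been established. An immersion $g$ whose normal bundle merely \emph{admits} a lift to $S\bigl((r+1)\gamma_{k+1}\bigr)$ — equivalently, a nowhere-zero section of $(r+1)\nu_g$ — carries an \emph{arbitrary} such section, which has no a priori relation to the canonical one produced by the vertical field; the projection $pr\circ g$ may well have $\Sigma^{1_s,0}$-points with $s>r$. To go back you must deform $g$ (by regular homotopy, or at least within its structured cobordism class) so that the canonical sections realize the prescribed lift; this is an h-principle/Compression-Theorem type statement, not a transversality perturbation, and your closing remark that it is ``routine'' is where the proof breaks. (The authors explicitly flag the analogous converse in the quaternionic case as a separate claim ``true up to regular homotopy.'') The paper avoids having to prove this converse directly: instead of showing the correspondence is a bijection of cobordism sets, it proves $\theta_r$ is an equivalence by induction on $r$, comparing the key fibration $\overline X_{r-1}\to\overline X_r\to\Gamma S^rT\bigl((r+1)\gamma_k\bigr)$ from \cite{GT} with the fibration obtained by applying $\Omega\Gamma$ to the cofibration $T\zeta_S|_{S(r\gamma_{k+1})}\subset T\zeta_S\to T\bigl((r+1)(\gamma_k\oplus\varepsilon^1)\bigr)$, and invoking the five lemma. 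To complete your route you would need to either supply the realization argument (e.g.\ via the Rourke--Sanderson Compression Theorem, constructing from a prescribed nowhere-zero $(r+1)$-section an immersion whose projection is a prim $\Sig r$-map inducing that section) or switch to the paper's inductive five-lemma scheme.
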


From here onward, the symbol $\cong_\Q$ stands for rational homotopy equivalence.

\begin{theorem}\label{thm:fibration}
\begin{enumerate}[a)]
\item There is a fibration $\overline{X_r} \xrightarrow{\Omega^2\Gamma T\left((r+2)\gamma_{k+1}\right)} \Omega\Gamma T\gamma_{k+1}$.
\item If $k$ is odd, then
$$\Omega\Gamma T\gamma^{\rm SO}_{k+1} \cong_\Q \overline X^{\rm SO}_r \times \Omega\Gamma T\left( (r+2)\gamma^{\rm SO}_{k+1}\right)$$
\item If $k$ is even, then
$$\overline X^{\rm SO}_r \cong_\Q \Omega^2\Gamma T\left((r+2)\gamma^{\rm SO}_{k+1}\right) \times \Omega\Gamma T\gamma^{\rm SO}_{k+1}$$
\end{enumerate}
\end{theorem}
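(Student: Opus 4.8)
The plan is to deduce all three parts from a single cofibre sequence of Thom spaces together with Theorem~\ref{thm:classic}. Write $\gamma=\gamma_{k+1}$, and let $D=D\bigl((r+1)\gamma\bigr)$ and $S=S\bigl((r+1)\gamma\bigr)$ be the disc and sphere bundles of $(r+1)\gamma$, so that $S$ is the base occurring in Theorem~\ref{thm:classic}, $D/S=T\bigl((r+1)\gamma\bigr)$, and $D$ deformation retracts onto $B(k+1)$. Pull $\gamma$ back over $D$ and collapse the part lying over $S$: a routine comparison of the associated disc and sphere bundles (writing $D(\gamma|_D)=D(\gamma)\times_{B(k+1)}D\bigl((r+1)\gamma\bigr)$ and matching the boundary sphere bundle with $S\bigl((r+2)\gamma\bigr)$) gives
\[
T(\gamma|_D)\big/\,T(\gamma|_S)=T\bigl((r+2)\gamma\bigr),\qquad T(\gamma|_D)\simeq T\gamma,\qquad T(\gamma|_S)=T\zeta_S ,
\]
i.e.\ a cofibre sequence $T\zeta_S\xrightarrow{\,i\,}T\gamma\xrightarrow{\,p\,}T\bigl((r+2)\gamma\bigr)$ in which $p$ is the collapse map; under the equivalence $T(\gamma|_D)\simeq T\gamma$ it is the map induced by the inclusion of $\gamma$ as a direct summand of $(r+2)\gamma$. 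This construction is valid verbatim for each of the three structure groups. Since $\Gamma=\Omega^\infty\Sigma^\infty$ carries cofibre sequences of spaces to fibre sequences of spaces ($\Sigma^\infty$ sends them to cofibre sequences of spectra, which are fibre sequences, and $\Omega^\infty$ preserves fibre sequences), applying $\Gamma$ to the cofibre sequence and then looping once and extending to the left yields the fibration
\[
\Omega^2\Gamma T\bigl((r+2)\gamma_{k+1}\bigr)\longrightarrow\Omega\Gamma T\zeta_S\longrightarrow\Omega\Gamma T\gamma_{k+1} ,
\]
whose total space is $\overline{X_r}$ by Theorem~\ref{thm:classic}. This is part a).

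For parts b) and c) we must decide when this fibration splits rationally. Since $\Omega^\infty$ takes finite wedges of spectra to products and $\Omega\Gamma(\cdot)=\Omega^\infty\Sigma^{-1}\Sigma^\infty(\cdot)$, $\Omega^2\Gamma(\cdot)=\Omega^\infty\Sigma^{-2}\Sigma^\infty(\cdot)$, it suffices to split the rational stable homotopy type of the cofibre sequence. Rationally a spectrum is a wedge of rational spheres, so a map of rational spectra vanishes as soon as it does on rational cohomology, and a cofibre sequence of rational spectra splits exactly when its connecting map does. From the long exact cohomology sequence this occurs iff $p^{*}\colon\widetilde H^{*}\bigl(T((r+2)\gamma);\Q\bigr)\to\widetilde H^{*}\bigl(T\gamma;\Q\bigr)$ is injective; in that case $\Sigma^\infty T\gamma\cong_\Q\Sigma^\infty T\zeta_S\vee\Sigma^\infty T\bigl((r+2)\gamma\bigr)$, and $\Omega^\infty\Sigma^{-1}$ of this is $\Omega\Gamma T\gamma_{k+1}\cong_\Q\overline{X_r}\times\Omega\Gamma T\bigl((r+2)\gamma_{k+1}\bigr)$. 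If instead $p$ itself is rationally null, the rotated cofibre sequence $\Sigma^{-1}\Sigma^\infty T((r+2)\gamma)\to\Sigma^\infty T\zeta_S\to\Sigma^\infty T\gamma$ splits, $\Sigma^\infty T\zeta_S\cong_\Q\Sigma^\infty T\gamma\vee\Sigma^{-1}\Sigma^\infty T\bigl((r+2)\gamma\bigr)$, and $\Omega^\infty\Sigma^{-1}$ of this is $\overline{X_r}\cong_\Q\Omega\Gamma T\gamma_{k+1}\times\Omega^2\Gamma T\bigl((r+2)\gamma_{k+1}\bigr)$. Thus everything reduces to computing $p^{*}$.

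Now take $B(k+1)=BSO(k+1)$, so $T\gamma$ and $T\bigl((r+2)\gamma\bigr)$ have rational Thom isomorphisms. By the standard relation $j^{*}U_{V\oplus W}=U_V\cdot e(W)$ for the map $j\colon TV\to T(V\oplus W)$ induced by the inclusion of a summand, $p^{*}$ is, under the Thom isomorphisms, multiplication by $e\bigl((r+1)\gamma^{\rm SO}_{k+1}\bigr)=e\bigl(\gamma^{\rm SO}_{k+1}\bigr)^{\,r+1}\in H^{*}\bigl(BSO(k+1);\Q\bigr)$. If $k$ is odd, then $k+1$ is even and $e\bigl(\gamma^{\rm SO}_{k+1}\bigr)$ is a polynomial generator of $H^{*}(BSO(k+1);\Q)$, in particular not a zero divisor, so $p^{*}$ is injective and part b) follows. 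If $k$ is even, then $k+1$ is odd, and the Euler class of an odd-rank oriented vector bundle is $2$-torsion (fibrewise $-\mathrm{id}$ reverses the orientation while being homotopic to the identity), so $e\bigl(\gamma^{\rm SO}_{k+1}\bigr)=0$ rationally, $p$ is rationally null, and part c) follows.

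The step I expect to require the most care is the geometric input of the first paragraph together with the identification of $p^{*}$: one must check that, after the deformation retraction $D\simeq B(k+1)$, the collapse map really becomes the summand-inclusion map so that the Thom-class formula applies, and one must keep track of the suspension shifts so as to read off correctly which of the two product decompositions occurs for each parity of $k$. The remaining ingredients --- that $\Gamma$ sends cofibre sequences to fibre sequences, the wedge/product interchange for $\Omega^\infty$, and the detection of rational spectra on rational cohomology --- are formal.
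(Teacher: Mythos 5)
Your argument is correct and follows the paper's proof in all essentials: the same cofibration $T\zeta_S \to T\gamma_{k+1} \to T\left((r+2)\gamma_{k+1}\right)$ obtained by pulling $\gamma_{k+1}$ back over the disc bundle $D\left((r+1)\gamma_{k+1}\right)$, the same application of $\Omega\Gamma$ and fiber-rotation for part a), and the same dichotomy on the parity of $k$ governed by the Euler class of $\gamma^{\rm SO}_{k+1}$ for parts b) and c). The only (harmless) variation is that you detect the rational splitting by identifying the collapse map with the summand inclusion and computing $p^{*}$ as multiplication by $e\left((r+1)\gamma^{\rm SO}_{k+1}\right)$ under the Thom isomorphism, and then split at the level of rational spectra, whereas the paper analyses the restriction $i^{*}\colon H^{*}(T\gamma^{\rm SO}_{k+1};\Q)\to H^{*}(T\zeta_S;\Q)$ via the Gysin sequence and splits using the rational Eilenberg--MacLane decomposition of the H-spaces involved; the two criteria ($p^{*}$ injective, respectively $p^{*}=0$) are equivalent to the paper's ($i^{*}$ epi, respectively mono) by the long exact sequence of the cofibration.
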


The proofs are postponed to Section \ref{section:proofs}.

An important application of Theorem \ref{thm:fibration} is that it allows the calculation of the ranks of the groups $\PrimSO\Sig r(P) \cong [P,\overline X^{\rm SO}_r]$ for arbitrary target manifolds $P$. First recall that the $H$-space $\overline X^{\rm SO}_r$ rationally splits as a product of Eilenberg-MacLane spaces
$$
\overline X^{\rm SO}_r \cong_\Q \prod_{j=1}^\infty K(\pi_j(X^{\rm SO}_r)\otimes \Q;j).
$$
This implies that for every $P$ we have
\begin{equation}\label{eq:cobSplit}
\begin{aligned}
[P,\overline X^{\rm SO}_r] \otimes \Q & \cong \bigoplus_{j=1}^\infty [P,K(\pi_j(\overline X^{\rm SO}_r)\otimes \Q;j)] \cong\\
& \cong \bigoplus_{j=1}^\infty H_j(P;\Z) \otimes \pi_j(\overline X^{\rm SO}_r)\otimes \Q
\end{aligned}
\end{equation}
and we only need to calculate the ranks of the homotopy groups of $\overline X^{\rm SO}_r$.

When $k$ is odd, Theorem \ref{thm:fibration} yields
$$
\begin{aligned}
\rk \pi_j(\overline X^{\rm SO}_r) &= \rk \pi_j(\Omega\Gamma T\gamma^{\rm SO}_{k+1}) - \rk \pi_j\left(\Omega\Gamma T\left( (r+2)\gamma^{\rm SO}_{k+1}\right)\right) =\\
& = \rk \pi^s_{j+1}(T\gamma^{\rm SO}_{k+1}) - \rk \pi^s_{j+1}\left(T\left( (r+2)\gamma^{\rm SO}_{k+1}\right)\right)=\\
&= \rk H_{j+1}(T\gamma^{\rm SO}_{k+1};\Z) - \rk H_{j+1}\left(T\left( (r+2)\gamma^{\rm SO}_{k+1}\right);\Z\right)=\\
&= \rk H_{j-k}(BSO(k+1);\Z) - \\
&\qquad\qquad - \rk H_{j+1-(r+2)(k+1)}(BSO(k+1);\Z);
\end{aligned}
$$
and when $k$ is even we get
$$
\begin{aligned}
\rk \pi_j(\overline X^{\rm SO}_r) &= \rk \pi_j\left(\Omega^2\Gamma T\left( (r+2)\gamma^{\rm SO}_{k+1}\right)\right) + \rk \pi_j(\Omega\Gamma T\gamma^{\rm SO}_{k+1}) =\\
&= \rk \pi^s_{j+2}\left(T\left( (r+2)\gamma^{\rm SO}_{k+1}\right)\right) + \rk \pi^s_{j+1}( T\gamma^{\rm SO}_{k+1}) =\\
&= \rk H_{j+2}\left(T\left( (r+2)\gamma^{\rm SO}_{k+1}\right);\Z\right) + \rk \pi^s_{j+1}( T\gamma^{\rm SO}_{k+1};\Z) =\\
&= \rk H_{j+2-(r+2)(k+1)}(BSO(k+1);\Z) + \\
&\qquad\qquad + \rk H_{j-k}(BSO(k+1);\Z).
\end{aligned}
$$
Substituting the ranks of the homology groups ${H_*(BSO(k+1);\Z)}$ into the formula \eqref{eq:cobSplit} we obtain the following expressions:
\begin{cor}
Denote by $p_{\leq t}(m)$ the number of partitions of $m$ into positive integers between $1$ and $t$ (in particular, $p_{\leq t}(m)=0$ whenever $m$ is not a nonnegative integer).
\begin{enumerate}[a)]
\item If $k$ is odd,
$$
\begin{aligned}
& \rk \PrimSO\Sig r(P) = \sum_{j=1}^\infty \rk H_j(P,\Z) \times \\
&\times \left( p_{\leq \frac{k-1}{2}}\left(\frac{j-k}{4}\right)+ p_{\leq \frac{k-1}{2}}\left(\frac{j-2k-1}{4}\right) - \right.\\
&\left. - p_{\leq \frac{k-1}{2}}\left(\frac{j+1-(r+2)(k+1)}{4}\right)+  p_{\leq \frac{k-1}{2}}\left(\frac{j-k-(r+2)(k+1)}{4}\right)\right).
\end{aligned}
$$
\item If $k$ is even,
$$
\begin{aligned}
\rk \PrimSO\Sig r(P) &= \sum_{j=1}^\infty \rk H_j(P,\Z) \times \\
&\!\!\!\!\!\!\!\!\!\!\!\!\!\!\times \left( p_{\leq \frac{k}{2}}\left(\frac{j-k}{4}\right) + p_{\leq \frac{k}{2}}\left(\frac{j+2-(r+2)(k+1)}{4}\right) \right).
\end{aligned}
$$
\end{enumerate}
\end{cor}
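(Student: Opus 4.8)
The plan is to insert the computations of $\rk\pi_j(\overline X^{\rm SO}_r)$ carried out above from Theorem~\ref{thm:fibration} into the rational splitting~\eqref{eq:cobSplit}. Since $P$ is a compact manifold, $H_j(P;\Z)$ is finitely generated and vanishes for $j>\dim P$, so~\eqref{eq:cobSplit} is a finite direct sum and yields $\rk\PrimSO\Sig r(P)=\sum_j \rk H_j(P;\Z)\cdot\rk\pi_j(\overline X^{\rm SO}_r)$ with no convergence issue. Everything then reduces to expressing the one (for $k$ even) or two (for $k$ odd) ranks of homology groups of $BSO(k+1)$ occurring in $\rk\pi_j(\overline X^{\rm SO}_r)$ as partition numbers and collecting the coefficient of each $\rk H_j(P;\Z)$.

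When $k$ is even I would use the classical fact $H^*(BSO(k+1);\Q)=\Q[p_1,\dots,p_{k/2}]$ with $\deg p_i=4i$; then $\rk H_d(BSO(k+1);\Z)$ is the number of partitions of $d/4$ into parts of size at most $k/2$, i.e.\ $p_{\le k/2}(d/4)$, which is $0$ unless $4\mid d$ and $d\ge 0$, in accordance with the stated convention. Substituting $d=j-k$ and $d=j+2-(r+2)(k+1)$ into the even-$k$ identity $\rk\pi_j(\overline X^{\rm SO}_r)=\rk H_{j-k}(BSO(k+1);\Z)+\rk H_{j+2-(r+2)(k+1)}(BSO(k+1);\Z)$ gives part (b) at once.

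When $k$ is odd the Euler class enters: $H^*(BSO(k+1);\Q)=\Q[p_1,\dots,p_{(k-1)/2},e]$ with $\deg p_i=4i$ and $\deg e=k+1$ (the relation $e^2=p_{(k+1)/2}$ eliminating the top Pontryagin class), so the Poincaré series of $H_*(BSO(k+1);\Q)$ is $\frac1{1-t^{k+1}}\prod_{i=1}^{(k-1)/2}\frac1{1-t^{4i}}$. The key point I would use is that $k+1$ is even: writing $\frac1{1-t^{k+1}}=(1+t^{k+1})\cdot\frac1{1-t^{2(k+1)}}$ and noting that $2(k+1)=4\cdot\tfrac{k+1}{2}$, the factor $\frac1{1-t^{2(k+1)}}$ merges with $\prod_{i=1}^{(k-1)/2}\frac1{1-t^{4i}}$ into $\prod_{i=1}^{(k+1)/2}\frac1{1-t^{4i}}$. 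Reading off the coefficient of $t^d$ then gives $\rk H_d(BSO(k+1);\Z)=p_{\le (k+1)/2}(d/4)+p_{\le (k+1)/2}\big((d-(k+1))/4\big)$. Feeding this (with $d=j-k$, hence $d-(k+1)=j-2k-1$, into the first summand, and $d=j+1-(r+2)(k+1)$, hence $d-(k+1)=j-k-(r+2)(k+1)$, into the second) into the odd-$k$ identity $\rk\pi_j(\overline X^{\rm SO}_r)=\rk H_{j-k}(BSO(k+1);\Z)-\rk H_{j+1-(r+2)(k+1)}(BSO(k+1);\Z)$, and collecting coefficients in~\eqref{eq:cobSplit}, produces a sum of four partition numbers of the type displayed in part (a).

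The generating-function algebra and the bookkeeping of the four shifts $j-k$, $j-2k-1$, $j+1-(r+2)(k+1)$, $j-k-(r+2)(k+1)$ are routine; the single delicate step is the handling of the Euler class in the odd-$k$ case. One must invoke the parity of $k+1$ at exactly the right moment to fold $\frac1{1-t^{2(k+1)}}$ into the Pontryagin product with the correct bound on the partition parts, and then keep careful track of the signs coming from the two Thom spaces --- an off-by-one in that bound or a sign slip is the most likely place for the computation to go wrong.
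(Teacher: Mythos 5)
Your route is exactly the paper's: the paper offers no proof beyond the two displayed computations of $\rk \pi_j(\overline X^{\rm SO}_r)$ preceding the corollary and the instruction to substitute the ranks of $H_*(BSO(k+1);\Z)$ into \eqref{eq:cobSplit}, and you carry out precisely that substitution, supplying the Poincar\'e-series bookkeeping that the paper leaves implicit. Your part (b) reproduces the printed formula verbatim. In part (a), however, your handling of the Euler class gives $\rk H_d(BSO(k+1);\Q)=p_{\leq\frac{k+1}{2}}(d/4)+p_{\leq\frac{k+1}{2}}\bigl((d-k-1)/4\bigr)$ and hence the sign pattern $+,+,-,-$ on the four partition numbers, all with subscript $\frac{k+1}{2}$, whereas the corollary as printed has subscript $\frac{k-1}{2}$ throughout and a $+$ on the fourth term. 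The printed version cannot be right: already for $k=1$ it would make $\rk H_{j-1}(BSO(2);\Z)=\rk H_{j-1}(\C P^\infty;\Z)$ equal to $p_{\leq 0}\bigl(\frac{j-1}{4}\bigr)+p_{\leq 0}\bigl(\frac{j-3}{4}\bigr)$, which vanishes for all $j\geq 5$, while the true rank is $1$ for every odd $j$; and the fourth term, coming from the second summand of $\rk H_{j+1-(r+2)(k+1)}$, must carry the same minus sign as the third. So the discrepancy you would find is a typo in the paper --- the folding $\frac{1}{1-t^{k+1}}\prod_{i=1}^{(k-1)/2}\frac{1}{1-t^{4i}}=(1+t^{k+1})\prod_{i=1}^{(k+1)/2}\frac{1}{1-t^{4i}}$ works exactly as you describe --- and not a gap in your argument.
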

Analogous computations can be performed to obtain the ranks of the prim $\Sig r$ cobordism groups in the non-oriented and in the quaternionic cases as well.

\subsection{Geometric interpretation of Theorem \ref{thm:fibration}}

Denote by $\overline X_\infty$ the classifying space of all prim maps (of codimension $k$). Clearly $\overline X_\infty = \underset{r \to \infty}{\lim} \overline X_r$. By considering the immersion lifts of the involved maps it is easy to see that $\overline X_\infty = \Omega \Gamma T\gamma_{k+1}$.

\begin{lemma}
The homotopy exact sequence of the fibration in Theorem \ref{thm:fibration} $a)$ can be identified with the homotopy exact sequence of the pair $(\overline X_\infty,\overline X_r)$.
\end{lemma}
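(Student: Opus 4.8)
The plan is to show that, once the identifications already available are made explicit, the fibration of Theorem~\ref{thm:fibration}~a) is nothing but the inclusion $\overline X_r\hookrightarrow\overline X_\infty$, and then to invoke the standard fact that the homotopy exact sequence of a fibration agrees with the homotopy exact sequence of the pair formed by its base and its total space. So there are two tasks. First: the base of the fibration is $\Omega\Gamma T\gamma_{k+1}$, which equals $\overline X_\infty$ as recorded just before the Lemma; but one must also check that the projection $p\colon\overline X_r\to\Omega\Gamma T\gamma_{k+1}$ is homotopic to the \emph{canonical} map $\overline X_r\to\overline X_\infty$, namely the one induced on classifying spaces by the forgetful natural transformations $\Prim r(\cdot)\to\Prim{r+1}(\cdot)\to\cdots$ (every prim $\Sigma^{1_r}$-map is in particular a prim map). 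Second: one recalls why the homotopy exact sequence of a fibration is naturally the homotopy exact sequence of such a pair.

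For the first task I would return to the construction of the fibration in the proof of Theorem~\ref{thm:fibration}~a) in Section~\ref{section:proofs}. There the map onto $\Omega\Gamma T\gamma_{k+1}$ is produced by remembering, of a prim $\Sigma^{1_r}$-map, only its immersion lift composed with the projection --- i.e.\ by forgetting the bound on the singularity type --- and on represented functors this is exactly the forgetful transformation from $\Prim r(\cdot)$ to the cobordism functor of all prim maps, the latter being $\underset{s\to\infty}{\lim}\ \Prim s(\cdot)$ and represented by $\overline X_\infty$. Hence $p$ is homotopic to the canonical map $\overline X_r\to\overline X_\infty$. Replacing, as we may, each map $\overline X_s\to\overline X_{s+1}$ by a cofibration, we realise $\overline X_\infty=\bigcup_s\overline X_s$ with $\overline X_r$ a subcomplex of it, so that the pair $(\overline X_\infty,\overline X_r)$ and its homotopy exact sequence are defined and $p$ is (homotopic to) the inclusion.

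For the second task, recall that for any fibration $F\to E\xrightarrow{p}B$, with $F$ its fibre (equivalently its homotopy fibre) and $M_p\simeq B$ its mapping cylinder, there is a natural isomorphism $\pi_m(F)\cong\pi_{m+1}(M_p,E)$: an element of $\pi_m(F)$ is the same datum as a map $S^m\to E$ together with a null-homotopy of the composite $S^m\to E\xrightarrow{p}B$, which is precisely an element of $\pi_{m+1}(B,E)$. These isomorphisms are compatible with the maps to $\pi_m(E)$ and with the maps out of $\pi_{m+1}(B)$, so they splice the homotopy exact sequence of the fibration onto that of the pair $(B,E)$. Applying this with $E=\overline X_r$, $B=\overline X_\infty$ and $F=\Omega^2\Gamma T((r+2)\gamma_{k+1})$ yields the Lemma. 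The only genuinely non-formal step is the first one: one has to inspect the construction in Section~\ref{section:proofs} to be sure that the base of the fibration is attached to $\overline X_r$ by the natural map $\overline X_r\to\overline X_\infty$, and not merely by some a~priori different homotopy equivalence $\Omega\Gamma T\gamma_{k+1}\simeq\overline X_\infty$; granting that, the rest is routine homotopy theory.
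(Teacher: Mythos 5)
Your argument is correct, but it is a genuinely different proof from the one in the paper. The paper proceeds geometrically: it identifies $\pi_{n+k+1}(\overline X_\infty,\overline X_r)$ with a relative cobordism group of prim maps $F:(N^{n+1},\partial N)\to(\R^{n+k+1}_+,\R^{n+k})$ whose boundary is a $\Sigma^{1_r}$-map, defines a map $\alpha$ to $\pi_{n+k+1}\left(\Omega\Gamma T\left((r+2)\gamma_{k+1}\right)\right)$ by sending $F$ to the cobordism class of its $\Sigma^{1_{r+1}}$-stratum with the normal splitting inherited from the immersion lift, observes that this gives a chain map between the two exact sequences, and concludes by the five lemma. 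You instead make no use of the singularity strata at all: you identify the fibration projection with the canonical map $\overline X_r\to\overline X_\infty$ and then invoke the formal isomorphism $\pi_{m+1}(M_p,E)\cong\pi_m(F_p)$ between the relative homotopy of the mapping-cylinder pair and the homotopy of the homotopy fibre. Both routes hinge on the same non-formal input --- that the base map of the fibration constructed in Section~\ref{section:proofs} is, on represented functors, the forgetful transformation $\Prim r(\cdot)\to\underset{s\to\infty}{\lim}\Prim s(\cdot)$ --- which you rightly single out and which the paper leaves implicit in the phrase ``thus we obtain a chain map.'' What your version loses is the explicit geometric description of the isomorphism on the relative term: the paper's $\alpha$ is what makes the subsequent interpretation (a prim map is rationally cobordant to a $\Sigma^{1_r}$-map exactly if its $\Sigma^{1_{r+1}}$-stratum is rationally null-cobordant) immediate, whereas from your abstract identification one would still have to check that the connecting homomorphism is realized by taking the top singularity stratum. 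What it gains is that one need not verify commutativity of the squares involving a separately constructed geometric map before applying the five lemma.
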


\begin{proof}
We first construct a map
\begin{align*}
\alpha : \pi_{n+k+1}(\overline X_\infty,\overline X_r) \to & \pi_{n+k}\left(\Omega^2\Gamma T \left((r+2)\gamma_{k+1}\right)\right) =\\
&\qquad = \pi_{n+k+1}\left(\Omega\Gamma T \left((r+2)\gamma_{k+1}\right)\right).
\end{align*}
The relative homotopy group $\pi_{n+k+1}(\overline X_\infty,\overline X_r)$ can clearly be identified with the relative cobordism group of those prim maps $F:(N^{n+1},\partial N) \to (\R^{n+k+1}_+,\R^{n+k})$ that possess the property of $F|_{\partial N} : \partial N \to \R^{n+k}$ being a $\Sigma^{1_r}$-map. The map $\alpha$ associates to the cobordism class of $F$ the cobordism class of its $\Sigma^{1_{r+1}}$-points equipped with the normal structure of its immersion lift, which is a splitting of the normal bundle into $(r+2)$ isomorphic bundles. This kind of maps is classified by the space $\Omega\Gamma T\left((r+2)\gamma_{k+1}\right)$. Thus we obtain a chain map of the homotopy exact sequence of the pair $(\overline X_\infty,\overline X_r)$ to that of the fibration of Theorem \ref{thm:fibration} $a)$. The five lemma implies that $\alpha$ is an isomorphism.
\end{proof}

Theorem \ref{thm:fibration} $b)$ and $c)$ states that this exact sequence splits rationally.
To elaborate, if $k$ is odd then the sequence
$$
0 \to \pi_{n+k}(\overline X_r)\to\pi_{n+k}(\overline X_\infty) \to \pi_{n+k}(\overline X_\infty,\overline X_r) \to 0
$$
is exact at the middle term and has finite homology everywhere else. Geometrically this means that a prim map is rationally cobordant to a $\Sigma^{1_r}$-map exactly if its $\Sigma^{1_{r+1}}$-singularity stratum is rationally null-cobordant.

If $k$ is even, then the same is true for the sequence
$$
0 \to \pi_{n+k+1}(\overline X_\infty,\overline X_r)\to\pi_{n+k}(\overline X_r) \to \pi_{n+k}(\overline X_\infty) \to 0
$$
and it geometrically means that any prim map is rationally cobordant to a $\Sigma^{1_r}$-map. Furthermore a $\Sigma^{1_r}$-map that is rationally null-cobordant as an arbitrary prim map is determined up to rational cobordism by the rational cobordism class of the $\Sigma^{1_{r+1}}$-stratum of any prim map that it bounds rationally.

\subsection{Quaternionic prim maps}\label{section:quaternion}

\bigskip

Given an $(n+3)$-dimensional manifold $P^{n+3}$ let us consider immersions of closed cooriented $n$-dimensional manifolds immersed in $P \times \R^1$ with a quaternionic structure on their normal bundle. This means that each normal fibre is provided with a quaternionic structure, and the gluing maps respect this structure. If this structure group preserves the norms of the normal vectors as well (which can always be assumed), then this structure group is the symplectic group $Sp(1) \cong Spin(3) \cong S^3$, the group of unit quaternions.

The cobordism group of such immersions is in one-to-one correspondence with the set of homotopy classes $\left[SP,\Gamma \HH P^\infty\right]$; in particular taking $P=S^{n+3}$ yields a group isomorphic to

$$\pi_{n+4}(\Gamma \HH P^\infty) = \pi^s_{n+4}(\HH P^\infty).$$

Completely analogously to the codimension $2$ oriented case (when a complex structure can be defined on the normal bundle, see \cite{sing2}) we have that if the hyperplane projection of such an immersion is a $\Sig{r}$-map (i.e. it has no singularity $\Sig{i}$ for $i > r$), then the normal bundle of the immersion can be pulled back from $\HH P^r$. The inverse is also true up to regular homotopy: if the immersion has its normal bundle pulled back from the canonical quaternionic line bundle over $\HH P^r$, then it can be deformed by a regular homotopy into an immersion such that its hyperplane projection is a $\Sig{r}$-map. We shall call such prim maps \emph{quaternionic $\Sig{r}$-prim maps}.

The cobordism group of such maps into $P$ can be defined in a standard way and will be denoted by $\PrimQ\Sig{r}(P)$.

Let $\bar X^{Sp}_r$ denote the classifying space of these cobordism groups, so that it satisfies

$$\PrimQ\Sig{r}(P) =  [\dot P,\bar X^{Sp}_r]_*$$

Here $\dot P$ denotes the one-point compactification of $P$ (if $P$ itself is compact, then this is the disjoint union of $P$ and an extra point); $[-,-]_*$ denotes the set of pointed homotopy classes.

Finally, in analogy with the complex (codimension $2$) case of \cite{sing2} we obtain that the classifying space $\bar X^{Sp}_r$ admits the representation

$$\bar X^{Sp}_r = \Omega \Gamma \HH P^{r+1}.$$

The so-called singularity spectral sequence (see \cite{sing2} for details) in homotopy groups that arises from the sequence of fibrations
$$
\bar X^{Sp}_{r-1} \subset \bar X^{Sp}_r \subset \bar X^{Sp}_{r+1} \subset \dots
$$
coincides (after a shift of the indices) with the spectral sequence in stable homotopy groups of the filtration
$$
\HH P^0 \subset \HH P^1 \subset \dots \HH P^r\subset \dots  \subset \HH P^\infty
$$

The first page of this spectral sequence is 

$$
E^1_{p,q} = \pi^s_{p+q} (\HH P^p/\HH P^{p-1}) = \pi^s_{p+q}(S^{4p}) = \pi^s(q-3p)
$$

\bigskip
\vbox{
$$
\xymatrix{
q=10 & \pi^s(7) \cong \Z_{240} & \pi^s(4)=0 & \Z_2 \\
q=9 & \pi^s(6) \cong \Z_2\langle \nu^2\rangle & \pi^s(3) \cong \Z_{24} \ar@{->>}[l] & \Z \ar[l] \ar[ull] \\
q=8 & 0 & 0 & \\
q=7 & 0 & 0 & \\
q=6 & \pi^s(3) \cong \Z_{24} & \Z \ar@{->>}[l] & \\
q=5 & \pi^s(2) \cong \Z_2 & & \\
q=4 & \pi^s(1) \cong \Z_2 & & \\
q=3 & \Z & \hspace*{60pt} & \hspace*{0pt}\\
& p=1 & p=2 & p=3 \\
}
$$
\vspace*{-128mm}
$$
\hspace*{18pt}\begin{array}{c||c|c|c|}
\cline{2-4}
\hspace*{60pt} & \hspace*{84pt} & \hspace*{84pt} & \hspace*{48pt} \\[27pt]
\cline{2-4}
 & & & \\[27pt]
\cline{2-4}
 & & & \\[27pt]
\cline{2-4}
 & & & \\[27pt]
\cline{2-4}
 & & & \\[27pt]
\cline{2-4}
 & & & \\[27pt]
\cline{2-4}
 & & & \\[27pt]
\cline{2-4}
 & & & \\[27pt]
\hhline{~||=|=|=|}
\end{array}
\hspace*{70pt}
$$
}
\bigskip

The only non-finite groups among the groups $E^1_{p,q}$ are hence those on the line $q= 3p$, these are all $\Z$.

\begin{lemma}
The group $E^\infty_{p,3p} \cong \Z$ considered as a subgroup of $E^1_{p,3p}\cong \Z $ has index equal to the order of the cokernel of the stable Hurewicz homomorphism $ \pi^s_{p+q} (\HH P^\infty) \to H_{p+q}(\HH P^\infty)$.
\end{lemma}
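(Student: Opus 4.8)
The plan is to compare the singularity spectral sequence of the filtration $\bar X^{Sp}_0 \subset \bar X^{Sp}_1 \subset \cdots$ with the (homological) Atiyah--Hirzebruch-type spectral sequence computing $H_*(\HH P^\infty)$, the latter being concentrated entirely on the line $q = 3p$ since $H_*(\HH P^\infty) = H_*(\varinjlim \HH P^m)$ is free with one generator in each degree $4p$. Because $\pi^s_*$ and $H_*$ are both generalized homology theories applied to the same filtered space, there is a natural transformation (the stable Hurewicz map, applied filtration-degree by filtration-degree) from the stable-homotopy spectral sequence to the homology spectral sequence; on the $E^1$-page this is precisely the collection of stable Hurewicz homomorphisms $\pi^s_{p+q}(S^{4p}) \to H_{p+q}(S^{4p})$, which is an isomorphism on the line $q = 3p$ (both sides $\cong \Z$) and which kills everything off that line from the homology side (where the target is $0$). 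So the question reduces to tracking what the differentials do to the $\Z$ sitting in position $(p,3p)$.

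First I would record that on the $\Z$-line the homology spectral sequence degenerates at $E^1$ — there are no differentials into or out of $(p,3p)$ in homology because every neighboring group is zero — so $E^\infty_{p,3p} = E^1_{p,3p} = \Z$ there and the total homology is detected exactly by these classes. On the homotopy side, the only possible nonzero differentials touching $(p,3p)$ are those landing in it (from positions $(p+s, 3p+s-1)$, carrying finite groups) and those leaving it (to positions carrying finite groups as well, namely the $q = 3p-2$-ish spots, which off the $\Z$-line are finite). Consequently $E^\infty_{p,3p}$ is a subgroup of $\Z$ whose index is finite, and the subquotient structure on the homotopy side, compared with the homology side via the Hurewicz natural transformation, shows that the composite $E^\infty_{p,3p}(\pi^s) \hookrightarrow E^1_{p,3p}(\pi^s) \xrightarrow{\ \cong\ } E^1_{p,3p}(H) = E^\infty_{p,3p}(H)$ has cokernel measuring exactly the failure of the associated-graded Hurewicz map to be onto in total degree $4p$.

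The last step is to identify that cokernel with the cokernel of the genuine stable Hurewicz homomorphism $h\colon \pi^s_{4p}(\HH P^\infty) \to H_{4p}(\HH P^\infty)$. This is the standard comparison-of-filtrations argument: the Hurewicz map is filtration-preserving, on associated graded pieces in total degree $4p$ it is the map $E^\infty_{p,3p}(\pi^s)\to E^\infty_{p,3p}(H)$ just analyzed (all other associated graded pieces of $H_{4p}(\HH P^\infty)$ vanish, since that group is $\Z$ concentrated in filtration $p$), and since both filtrations are finite in each total degree, the snake/zigzag argument gives $|\Coker h| = |\Coker(E^\infty_{p,3p}(\pi^s)\to E^\infty_{p,3p}(H))| = [\,E^1_{p,3p} : E^\infty_{p,3p}\,]$, which is the claimed index.

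The main obstacle is making the naturality of the spectral-sequence comparison fully rigorous — specifically, checking that the stable Hurewicz transformation really does induce a morphism of the two spectral sequences attached to the filtration (so that it is compatible with all differentials), and confirming that no differential in the homotopy spectral sequence can \emph{emanate} from $(p,3p)$ and hit something infinite; both points hinge on the fact that away from the line $q=3p$ every $E^1$-term is finite, which is exactly the content of the displayed chart, so I would phrase the argument to lean on that finiteness at each stage rather than on any delicate computation of the actual differentials.
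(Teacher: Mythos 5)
Your argument is essentially the paper's own proof: both compare the homotopy spectral sequence of the filtration $\HH P^0\subset \HH P^1\subset\dots$ with the degenerate homology spectral sequence via the stable Hurewicz natural transformation, use the finiteness of all $E^1$-terms off the line $q=3p$ to see that $E^\infty_{p,3p}$ is a finite-index subgroup of $E^1_{p,3p}\cong\Z$, and identify the associated-graded map on $E^\infty$ with the Hurewicz map in total degree $4p$. (The precise source/target positions of the differentials you list are slightly off, but only the finiteness of everything off the $\Z$-line is used, so this does not affect the argument.)
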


\begin{proof}
Consider the degenerate homological spectral sequence starting with ${\bf {H}} E^1_{p,q} = H_{p+q} (\HH P^p/\HH P^{p-1})$ (which is $\Z$ if $q=3p$ and $0$ otherwise) for the same filtration and the map from the spectral sequence $E^*_{p,q}$ into it induced by the stable Hurewicz homomorphisms on each page. On the first page we have isomorphism of the groups $E^1_{p,3p} \cong {\bf H} E^1_{p,3p} $,  both isomorphic to $\Z$. On the final page we have $E^\infty _{p,3p} = \Z$ identified with the free part of $\pi^s_{4p}(\HH P^\infty)$, and it is mapped into ${\bf {H}} E^\infty_{p,3p} = H_{4p}(\HH P^\infty) = \Z$, the image of this homomorphism being the same as the image of the stable Hurewicz homomorphism. 
\end{proof}

\begin{cor}
The product of the orders of the images of the differentials $d^r_{p,3p}: \Z = E^r_{p,3p} \to E^r_{p-r, 3p+r-1}$ 
(taken for all $r$ from $1$ to infinity) is equal to the index of the image of the stable Hurewicz homomorphism 
$\pi^s_{4p}(\HH P^\infty) \to H_{4p}(\HH P^\infty)$.
\end{cor}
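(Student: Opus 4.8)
The plan is to obtain the Corollary directly from the preceding Lemma by factoring the index $[E^1_{p,3p}:E^\infty_{p,3p}]$ through the successive pages of the spectral sequence. The first thing to establish is that along the diagonal $q=3p$ the spectral sequence has no nonzero differentials \emph{entering} it on any page. Indeed, a differential $d^r$ with target $E^r_{p,3p}$ would have to originate at $E^r_{p+r,\,3p-r+1}$, and since $E^1_{p+r,\,3p-r+1}=\pi^s\bigl((3p-r+1)-3(p+r)\bigr)=\pi^s(1-4r)=0$ for every $r\geq 1$ (a negative stem), this source group vanishes on every page. Hence for all $r\geq 1$ we have $E^{r+1}_{p,3p}=\ker\bigl(d^r_{p,3p}\colon E^r_{p,3p}\to E^r_{p-r,\,3p+r-1}\bigr)$ with no further quotient. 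Starting from $E^1_{p,3p}=\pi^s(0)\cong\Z$ and using that $E^\infty_{p,3p}\cong\Z$ by the Lemma (so every $E^r_{p,3p}$ contains a copy of $\Z$ and is therefore a nonzero subgroup of $\Z$, hence infinite cyclic), we obtain an honest descending chain of subgroups of $\Z$:
$$\Z\cong E^1_{p,3p}\supseteq E^2_{p,3p}\supseteq\cdots\supseteq E^\infty_{p,3p}\cong\Z.$$

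Next I would compute the index at each step. The target $E^r_{p-r,\,3p+r-1}$ is a subquotient of $E^1_{p-r,\,3p+r-1}=\pi^s(4r-1)$, which is finite, so $\im d^r_{p,3p}$ is finite; from the short exact sequence $0\to\ker d^r_{p,3p}\to E^r_{p,3p}\to\im d^r_{p,3p}\to 0$ we read off
$$[E^r_{p,3p}:E^{r+1}_{p,3p}]=[E^r_{p,3p}:\ker d^r_{p,3p}]=\bigl|\im d^r_{p,3p}\bigr|.$$
For $r>p$ the target has filtration degree $p-r<0$, so $d^r_{p,3p}=0$ and the chain stabilizes at $E^\infty_{p,3p}$; in particular only finitely many of these indices exceed $1$. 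Multiplying the successive indices gives
$$[E^1_{p,3p}:E^\infty_{p,3p}]=\prod_{r=1}^{\infty}[E^r_{p,3p}:E^{r+1}_{p,3p}]=\prod_{r=1}^{\infty}\bigl|\im d^r_{p,3p}\bigr|.$$

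Finally I would invoke the Lemma: its left-hand side $[E^1_{p,3p}:E^\infty_{p,3p}]$ equals the order of $\Coker\bigl(\pi^s_{4p}(\HH P^\infty)\to H_{4p}(\HH P^\infty)\bigr)$, and since $H_{4p}(\HH P^\infty)\cong\Z$ that order is precisely the index of the image of the stable Hurewicz homomorphism. Comparing with the previous display yields the assertion. I do not expect a genuine obstacle here — the statement is essentially a bookkeeping consequence of the Lemma — the only points requiring care being the degree count that rules out incoming differentials (this is what guarantees a nested chain of subgroups rather than mere subquotients) and the observation that all but finitely many of the indices $|\im d^r_{p,3p}|$ equal $1$, so that the product is finite and the computation is legitimate.
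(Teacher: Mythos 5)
Your proof is correct and is precisely the bookkeeping the paper intends: the Corollary is stated without proof as an immediate consequence of the preceding Lemma, and your argument (no incoming differentials on the line $q=3p$ since $\pi^s(1-4r)=0$, hence a nested chain of subgroups of $\Z$ whose successive indices are $|\im d^r_{p,3p}|$, telescoping to $[E^1_{p,3p}:E^\infty_{p,3p}]$) supplies exactly the routine details. No issues.
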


Segal \cite{Segal} has determined this index:

\begin{theorem}{\cite[Theorem 1.1.]{Segal}}
The image of $\pi^s_{4p}(\HH P^\infty) \to H_{4p}(\HH P^\infty)$ is $h(p)\cdot \Z$, where
\begin{itemize}
\item $h(p) = (2p)!$ for $p$ even and
\item $h(p) = (2p)!/2$ for $p$ odd.
\end{itemize}
\end{theorem}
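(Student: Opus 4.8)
The plan is to convert the statement into a concrete bordism problem, bound the relevant integer from below by a $K$-theoretic integrality obstruction, and meet that bound from above by an explicit construction. First I would identify $\Sigma^\infty\HH P^\infty$ with the Thom spectrum $MSp(1)$: the Thom space of the tautological quaternionic line bundle $\gamma\to\HH P^n=BSp(1)$ is $\HH P^{n+1}$. Under the Thom isomorphism $\widetilde H_{4p}(\HH P^\infty)\cong H_{4p-4}(\HH P^\infty)=\Z\langle[\HH P^{p-1}]\rangle$ the stable Hurewicz homomorphism becomes the characteristic-number map that sends the bordism class of a closed $(4p-4)$-manifold $W$ carrying a quaternionic line bundle $L$ together with a stable normal isomorphism $\nu_W\cong L$ to the integer $\langle e(L)^{p-1},[W]\rangle$, where $e(L)\in H^4(W;\Z)$ is the Euler class of the underlying oriented $4$-plane bundle of $L$. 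Thus $h(p)\cdot\Z$ is precisely the subgroup of $\Z$ generated by all these numbers, and the theorem is the computation of their greatest common divisor.

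For the divisibility half --- that each such number is a multiple of $h(p)$ --- I would invoke a Hattori--Stong / $e$-invariant type argument: a class of $\widetilde H_{4p}(\HH P^\infty)$ hit by $\pi^s_{4p}$ must be compatible with the connective $K$-theoretic $e$-invariant of $MSp(1)$. The arithmetic producing the factorial is the Chern character of $\gamma$: the complexification $\gamma_\C$ has Chern roots $\pm a$ with $a^2=-u$ (where $u$ generates $H^4(\HH P^\infty)$), so $\mathrm{ch}(\gamma_\C)=e^a+e^{-a}=2\sum_{k\ge0}(-1)^k u^k/(2k)!$, and the denominator $(2p)!$ appearing in degree $4p$ becomes, after dualising, the assertion that the Hurewicz image is divisible by (essentially) $(2p)!$. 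Obtaining the sharp $2$-primary statement --- full $(2p)!$ for $p$ even but only $(2p)!/2$ for $p$ odd --- requires carrying this out with $KO$- or $KSp$-theory in place of complex $K$-theory and using that $\gamma$ is genuinely quaternionic, which shifts the bound by a factor of $2$ in exactly the residue class of $p$ named in the statement.

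For the converse I would exhibit, for each $p$, a manifold-with-structure realising $h(p)$. One route constructs explicit closed $(4p-4)$-manifolds with a quaternionic-line-bundle normal structure --- for instance suitable modifications of $\HH P^{p-1}$ itself --- and evaluates $\langle e(L)^{p-1},[W]\rangle$ from the known total Pontryagin class of $\HH P^{p-1}$. An alternative uses the $S^2$-bundle $\C P^\infty\to\HH P^\infty$ (i.e.\ $BS^1\to BS^3$) and its Becker--Gottlieb transfer $\Sigma^\infty\HH P^\infty_+\to\Sigma^\infty\C P^\infty_+$ to compare the Hurewicz image for $\HH P^\infty$ with the classical one for $\C P^\infty$, where optimal spherical classes are built from $\C P^1\hookrightarrow\C P^\infty$ using the $H$-space structure. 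Either way one checks that the best achievable number coincides with the lower bound.

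The step I expect to be the main obstacle is the $2$-primary precision. The difficulty is already visible in the skeletal (Atiyah--Hirzebruch) spectral sequence of $\HH P^\infty$ underlying the paper's own computations: on the line $q=3p$ one has $E^1_{p,3p}=\Z$, and the product of the orders of the images of the differentials $d^r_{p,3p}$ equals $h(p)$. The reduced attaching maps of consecutive cells of $\HH P^\infty$ are built from the Hopf map $\nu\in\pi^s(3)=\Z_{24}$, but the relation $\nu^2\neq0$ in $\pi^s(6)=\Z_2$ forces the reduced attaching map of a cell sitting above a nontrivial subskeleton to be divisible by $2$, halving the contribution of the corresponding differential; whether this occurs for a given $p$ is a delicate parity question, and controlling it --- together with the higher, Toda-bracket-type differentials --- uniformly in $p$ is the crux of the argument. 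Packaging precisely this $2$-primary bookkeeping through $KO$/$KSp$-theory is what makes the closed form $(2p)!$ resp.\ $(2p)!/2$ drop out cleanly.
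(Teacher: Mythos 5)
This statement is not proved in the paper at all: it is quoted verbatim from Segal's article \cite{Segal} and used as a black box, so there is no internal proof to compare against. Measured against Segal's actual argument, your outline reproduces his strategy faithfully: identify $\Sigma^\infty\HH P^\infty$ with the Thom spectrum of the tautological quaternionic line bundle, translate the Hurewicz image into the characteristic numbers $\langle e(L)^{p-1},[W^{4p-4}]\rangle$ of manifolds with an $Sp(1)$-normal structure, obtain divisibility from a $K$-theoretic $e$-invariant computation driven by $\mathrm{ch}(\gamma_\C)=e^a+e^{-a}=2\sum(-1)^ku^k/(2k)!$, and realize the bound by explicit constructions. All of this is the correct frame, and your identification of the $2$-primary precision as the crux is exactly right.

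However, as written this is a plan rather than a proof, and the gap sits precisely at the crux you name. The complex Chern character argument you sketch only yields divisibility by $(2p)!/2$ in every degree, because of the leading factor $2$ in $\mathrm{ch}(\gamma_\C)$; the entire content of the theorem is the extra factor of $2$ for $p$ even, and you defer this to an unperformed $KO$/$KSp$ $e$-invariant computation. Likewise the realization half is only gestured at: neither the ``suitable modifications of $\HH P^{p-1}$'' nor the transfer comparison with $\C P^\infty$ is carried far enough to show that the value $h(p)$ (and not merely some multiple of it) is attained, and the parity bookkeeping with $\nu$ and $\nu^2$ in the cell structure that you correctly flag is never resolved. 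So the proposal identifies the right road --- Segal's road --- but stops before either inequality is actually established; to complete it you would need to execute the symplectic $e$-invariant calculation (or simply cite Segal, as the paper does).
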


It follows that all the differentials of the fragment of the spectral sequence seen on the diagram are epimorphic modulo the $2$-primary torsion part, and the differential $d^1_{2,6}:E^1_{2,6} \to E^1_{1,6}$ is (truly) epimorphic. In particular we obtain that the boundaries of the normal forms of the Morin maps of type $\Sigma^{1,0}$ and $\Sigma^{1,1,0}$ in codimension $3$ give a generator of the stable homotopy group of spheres $\pi^s(3)$ and a generator of the odd torsion of the stable homotopy group of spheres $\pi^s(7)$, respectively. In the Appendix we describe the first of these classes in more detail.

We also get that the torsion parts of $\pi^s_i(\HH P^{\infty})$ in the range $i \le 11$ are $2$-primary groups, and hence so are those of the cobordism groups $\PrimQ\Sig{3}(\R^{n+3})$ for $n\le 7$.

These cobordism groups are finitely generated, and are infinite  precisely when $n \equiv 0$ mod $4$. In fact we can prove the following more general theorem that determines the rational homotopy type of the classifying space $\overline X^{Sp}_r$:

\begin{theorem}
$$\overline X^{Sp}_r \cong_\Q S^3 \times S^7 \times \dots \times S^{4r+3}.$$
\end{theorem}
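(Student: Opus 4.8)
The plan is to read off the rational homotopy type of $\overline X^{Sp}_r$ directly from the identification $\overline X^{Sp}_r = \Omega\Gamma\HH P^{r+1}$ obtained above. First I would observe that
$$
\pi_n\left(\Omega\Gamma\HH P^{r+1}\right) = \pi_{n+1}\left(\Gamma\HH P^{r+1}\right) = \pi^s_{n+1}\left(\HH P^{r+1}\right),
$$
so that rationally $\pi_n(\overline X^{Sp}_r)\otimes\Q \cong \tilde H_{n+1}(\HH P^{r+1};\Q)$, using the standard fact (a consequence of Serre's mod-$\mathcal C$ theory, i.e. the rational Hurewicz theorem for spectra) that the rational reduced stable homotopy of a space agrees with its rational reduced homology. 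Since $\tilde H_*(\HH P^{r+1};\Q)$ is one-dimensional in degrees $4,8,\dots,4(r+1)$ and zero in all other degrees, the groups $\pi_n(\overline X^{Sp}_r)\otimes\Q$ are one-dimensional exactly for $n\in\{3,7,11,\dots,4r+3\}$ and vanish otherwise.

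Next, since $\overline X^{Sp}_r = \Omega\Gamma\HH P^{r+1}$ is a loop space (in fact an infinite loop space), it is a simple $H$-space, hence rationally it splits as a weak product of Eilenberg–MacLane spaces $\prod_n K\!\left(\pi_n(\overline X^{Sp}_r)\otimes\Q,\,n\right)$. By the previous paragraph this product equals $\prod_{j=0}^{r} K(\Q, 4j+3)$. Finally, for every odd $m$ one has $K(\Q,m)\cong_\Q S^m$: by Serre's computation the rational homotopy of an odd-dimensional sphere is concentrated in its top degree, so the Postnikov tower of $S^m$ is rationally trivial beyond stage $m$. Combining these observations yields
$$
\overline X^{Sp}_r \cong_\Q \prod_{j=0}^{r} S^{4j+3} = S^3\times S^7\times\dots\times S^{4r+3}.
$$

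I do not expect a genuine obstacle here: once the identification $\overline X^{Sp}_r = \Omega\Gamma\HH P^{r+1}$ is in hand, the argument is essentially formal. The only points that deserve a moment's care are the identification of rational stable homotopy with rational homology and the rational $H$-space splitting, both of which are classical, and the remark that all the relevant degrees $4j+3$ are odd — this is what makes the rational Eilenberg–MacLane factors genuine rational spheres (for even $m$ the space $K(\Q,m)$ carries additional rational homotopy in degree $2m-1$ and the clean product description would fail), and it holds automatically since $4j+3$ is always odd.
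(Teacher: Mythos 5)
Your argument is correct, but it travels a different road than the paper. You compute the rational homotopy groups of $\overline X^{Sp}_r=\Omega\Gamma\HH P^{r+1}$ directly via the rational stable Hurewicz isomorphism $\pi^s_*(\HH P^{r+1})\otimes\Q\cong \tilde H_*(\HH P^{r+1};\Q)$, then invoke the rational splitting of a (finite-type, simple) $H$-space into Eilenberg--MacLane factors and the classical fact that $K(\Q,m)\cong_\Q S^m$ for odd $m$. The paper instead first proves, by induction on the cell structure and using the finiteness of the positive stable stems, that $\HH P^{r+1}$ splits \emph{stably} and rationally as $S^4\vee S^8\vee\dots\vee S^{4r+4}$, and only then applies $\Omega\Gamma$ together with the formal identities $\Omega\Gamma SA=\Gamma A$, $\Gamma(A\vee B)=\Gamma A\times\Gamma B$ and $\Gamma S^i\cong_\Q S^i$. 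The two routes rely on the same underlying finiteness phenomena; yours is closer to standard rational homotopy theory and avoids the induction, while the paper's yields the stable rational splitting of $\HH P^{r+1}$ itself as a byproduct (which is also what feeds the spectral-sequence discussion in Section 2.2). One small slip in your closing aside: for even $m$ it is the sphere $S^m$, not $K(\Q,m)$, that carries the extra rational homotopy class in degree $2m-1$ (the Whitehead product class); $K(\Q,m)$ by definition has rational homotopy only in degree $m$. This does not affect your proof, since all the degrees $4j+3$ you actually use are odd.
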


\begin{proof}
We repeat the argument that determines the rational homotopy type of $\C P^m$ (which we learned from D. Crowley); we show by induction that the stable rational homotopy type of $\HH P^m$ is that of $S^4 \vee S^8 \vee \dots \vee S^{4m}$. By the induction hypothesis $\HH P^{m-1} \cong_\Q S^4 \vee S^8 \vee\dots \vee  S^{4m-4}$ and the stable homotopy class of the attaching map of the top dimensional cell of $\HH P^m$ is defined by stable maps from $S^{4m-1}$ to $S^{4i}$ for $i=1, 2, \dots, m-1$, and all these stable maps are rationally trivial (they have finite order).

Consequently $\overline X^{Sp}_r = \Omega  \Gamma \HH P^{r+1} \cong_\Q \Omega \Gamma (S^4\vee S^8 \vee \dots \vee S^{4r+4}) \cong \Gamma S^3 \times \Gamma S^7 \times \dots \times \Gamma S^{4r+3} \cong_\Q S^3 \times S^7 \times \dots \times S^{4r+3}$ (we used that $\Omega \Gamma S = \Gamma$; $\Gamma (A \vee B) = \Gamma A \times \Gamma B$; and $\Gamma S^i \cong_\Q S^i$).

\end{proof}

\begin{remark}
In \cite{sing2}[Lemma 4] it is shown that when $k=2$, we have $\overline{X_r}=\Gamma \C P^{r+1}$. This is a special case of Theorem \ref{thm:classic} as it follows from the next lemma. Recall that if $B(k+1)=BSO(2)$, then $\zeta_S=\pi_r^*\gamma_2^{\rm SO}$, where $\gamma_2^{\rm SO}$ is the universal oriented vector bundle of rank $2$ and $\pi_r: S\left((r+1)\gamma_2^{\rm SO}\right) \to BSO(2)$ is the sphere bundle of the vector bundle $\gamma_2^{\rm SO}\oplus \dots\oplus \gamma_2^{\rm SO}$ (with $(r+1)$ summands). We denote by $\gamma_1^\C$ the universal complex line bundle (over $\C P^\infty$).
\end{remark}

\begin{lemma}
The vector bundle $\zeta_S \to S\left((r+1)\gamma_2^{\rm SO}\right)$ and the complex line bundle $\gamma^\C_1|_{\C P^r} \to \C P^r$ are homotopically equivalent in the sense that there is a homotopy equivalence $f:\C P^r \to S((r+1)\gamma_2^{\rm SO})$ such that $f^*\zeta_S$ is isomorphic as an oriented rank $2$ real vector bundle to the tautological complex line bundle over $\C P^r$.
\end{lemma}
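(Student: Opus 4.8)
The plan is to realise $f$ as an explicit section of a sphere bundle over $\C P^r$, mapped into $S:=S((r+1)\gamma_2^{\rm SO})$; this makes the required bundle isomorphism tautological and, crucially, fixes its orientation automatically. Recall that $BSO(2)=\C P^\infty$ and that $\gamma_2^{\rm SO}$, carrying its orientation, coincides with the tautological complex line bundle $\gamma_1^\C$ over $\C P^\infty$; hence $S$ is the unit sphere bundle of $(r+1)\gamma_1^\C$, with fibre $S^{2r+1}$, and $\zeta_S=pr_S^*\gamma_1^\C$. Let $j\colon\C P^r\hookrightarrow\C P^\infty$ be the standard inclusion. The restricted bundle $j^*\bigl((r+1)\gamma_1^\C\bigr)=(r+1)\bigl(\gamma_1^\C|_{\C P^r}\bigr)$ has complex rank $r+1>r=\dim_\C\C P^r$, so it admits a nowhere-vanishing section; concretely, over a line $\ell=\C\lambda$ with $\lambda=(\lambda_0,\dots,\lambda_r)\in\C^{r+1}\setminus\{0\}$, the tuple $(v_0,\dots,v_r)$ with $v_i:=\|\lambda\|^{-2}\,\overline{\lambda_i}\,\lambda\in\ell$ is independent of the chosen generator $\lambda$ and satisfies $\sum_i\|v_i\|^2=1$, hence is a point of the sphere bundle over $\ell$. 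Setting $f(\ell):=(v_0,\dots,v_r)\in S$ (with $\ell$ viewed in $\C P^\infty$ via $j$) defines a continuous map $f\colon\C P^r\to S$ with $pr_S\circ f=j$.

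Granting this, the bundle statement is immediate:
\[
f^*\zeta_S=f^*pr_S^*\gamma_1^\C=(pr_S\circ f)^*\gamma_1^\C=j^*\gamma_1^\C=\gamma_1^\C|_{\C P^r},
\]
an isomorphism of complex line bundles — equivalently of oriented rank $2$ real vector bundles, since over a CW complex both are classified by the first Chern class in $H^2(\,\cdot\,;\Z)$. The step I expect to be the main pitfall is precisely this orientation bookkeeping: $\gamma_1^\C|_{\C P^r}$ and its complex conjugate are isomorphic as \emph{unoriented} real rank $2$ bundles but not as oriented ones (their first Chern classes are $\pm x$, and $x\ne -x$ in $H^2(\C P^r;\Z)$ for $r\ge 1$). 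A construction of $f$ from an \emph{abstract} homotopy equivalence $S\simeq\C P^r$ would therefore only give $f^*\zeta_S\cong\gamma_1^\C|_{\C P^r}$ up to conjugation, and would have to be repaired by post-composing with a conjugation self-map of $\C P^r$; the present construction avoids this because $pr_S\circ f$ is literally $j$, not merely homotopic to $\pm j$ — the conjugation is already absorbed into the section $v_i=\|\lambda\|^{-2}\overline{\lambda_i}\lambda$.

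Finally one must check that $f$ is a homotopy equivalence. For $r\ge1$ the total space $S$ is simply connected by the homotopy exact sequence of the fibration $S^{2r+1}\to S\xrightarrow{pr_S}\C P^\infty$; the case $r=0$ is trivial, since then $S\simeq S^\infty\simeq\C P^0$. The cohomology of $S$ follows from the Gysin sequence of this fibration: the Euler class of the oriented bundle $(r+1)\gamma_1^\C$ is its top Chern class $c_1(\gamma_1^\C)^{r+1}=a^{r+1}$, where $a\in H^2(\C P^\infty;\Z)$ is the generator, and cup product with $a^{r+1}$ is injective on $H^*(\C P^\infty;\Z)=\Z[a]$; hence the connecting homomorphisms in the Gysin sequence vanish, $pr_S^*$ is surjective with kernel the ideal $(a^{r+1})$, and $H^*(S;\Z)\cong\Z[\bar a]/(\bar a^{r+1})$ with $\bar a=pr_S^*a$ — the same graded ring as $H^*(\C P^r;\Z)$. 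Since $f^*\bar a=j^*a$ generates $H^2(\C P^r;\Z)$, the homomorphism $f^*$ is an isomorphism in every degree; both spaces being simply connected and of finite type, $f_*$ is then an isomorphism on integral homology, and $f$ is a homotopy equivalence by Whitehead's theorem. Alternatively one can bypass Gysin and Whitehead by identifying $S$ with the Borel construction $(S^\infty\times S^{2r+1})/S^1$, whose projection onto $S^{2r+1}/S^1=\C P^r$ is a fibre bundle with contractible fibre $S^\infty$ and hence a homotopy equivalence, and then checking directly that its composite with $f$ is the identity of $\C P^r$.
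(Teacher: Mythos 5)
Your proof is correct, but its main line of argument is genuinely different from the paper's. The paper works with the Borel construction $S^\infty\times_{S^1}S(\C^{r+1})$ --- essentially the construction you relegate to your final sentence: read as an $S(\C^{r+1})$-bundle over $S^\infty/S^1=\C P^\infty$ it is $S\left((r+1)\gamma_1^\C\right)$, read as an $S^\infty$-bundle over $S(\C^{r+1})/S^1=\C P^r$ it has contractible fibres and so yields the homotopy equivalence for free, and the extra $\C$ factor in $(S^\infty\times S(\C^{r+1})\times\C)/S^1$ carries the line bundle through both identifications at once. You instead exhibit an explicit nowhere-zero section of $(r+1)\gamma_1^\C|_{\C P^r}$, normalised to land in the sphere bundle, so that $pr_S\circ f=j$ holds on the nose; this makes $f^*\zeta_S\cong j^*\gamma_1^\C$ tautological, including the orientation, and settles explicitly the conjugation issue that the paper's proof handles only implicitly. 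The price is that you must verify separately that $f$ is a homotopy equivalence, which you do correctly (the Euler class $a^{r+1}$ is a non-zero-divisor in $\Z[a]$, so the Gysin sequence gives $H^*(S;\Z)\cong\Z[\bar a]/(\bar a^{r+1})$, and $f^*$ sends $\bar a$ to a generator; simple connectivity plus Whitehead finishes it). Both arguments are sound; the paper's is shorter because the contractible-fibre fibration delivers the equivalence and the bundle comparison simultaneously, while yours is more explicit and more careful about orientations. One small caveat on your closing alternative: with the diagonal $S^1$-action used in the paper, whether the composite of $f$ with the projection $S^\infty\times_{S^1}S(\C^{r+1})\to\C P^r$ is the identity or complex conjugation of $\C P^r$ depends on the convention chosen to identify the Borel construction with the sphere bundle of $(r+1)\gamma_1^\C$ (your section reads $y=\bar x$ in one natural set of Borel coordinates, precisely because of the $\overline{\lambda_i}$ needed for $S^1$-invariance). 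Since conjugation is a homeomorphism this still proves that $f$ is a homotopy equivalence, but the composite need not be literally $\mathrm{id}_{\C P^r}$; your main argument is unaffected, as it never invokes this identification.
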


\begin{proof}
It is well-known that $\gamma^\C_1$ can be identified with $\gamma^{\rm SO}_2$, and the tautological complex line bundle over $\C P^r$ is the restriction $\gamma_1^\C|_{\C P^r}$.

Consider the space $S^\infty \times S(\C^{r+1}) \times \C$ and the natural diagonal $S^1$-action on it: for $g\in S^1$ and $(x,y,z) \in S^\infty \times S(\C^{r+1}) \times \C$ set $g(x,y,z)=(gx,gy,gz)$.\footnote{Here $S(\C^{r+1})$ is considered to be the space of unit length complex vectors in $\C^{r+1}$.} The subspace $S^\infty \times S(\C^{r+1}) \times \{0\}$ is invariant under this action; the corresponding orbit space is $S^\infty \underset{S^1}{\times} S(\C^{r+1})$. Regarding this orbit space as an $S(\C^{r+1})$-bundle over $S^\infty/S^1$ we identify it with $S\left((r+1)\gamma_1^\C\right)$. Regarding the same orbit space as an $S^\infty$-bundle over $S(\C^{r+1})/S^1 = \C P^r$ we get that it is homotopically equivalent to $\C P^r$. The obtained homotopy equivalence between $\C P^r$ and $S\left((r+1)\gamma_1^\C\right)$ takes the tautological complex line bundle to (pullback of) the tautological complex line bundle since it extends to the entire orbit space $(S^\infty \times S(\C^{r+1}) \times \C)/S^1$, which is the total space of these bundles; this finishes the proof of the lemma.
\end{proof}

Similarly $\zeta^{Sp}_S \to S\left((r+1)\gamma^{Sp}_4\right)$ is homotopically equivalent to the vector bundle $\gamma_1^\HH|_{\HH P^r} \to \HH P^r$. This combined with Theorem \ref{thm:classic} gives that $\overline X_r^{Sp}$ is $\Omega \Gamma \HH P^{r+1}$.

\section{Proofs of Theorem \ref{thm:classic} and Theorem \ref{thm:fibration}}\label{section:proofs}

\begin{proof}[Proof of Theorem \ref{thm:classic}]
Given a generic immersion $g : M^n \looparrowright P^{n+k} \times \R^1$ we first produce sections $s_1$, $s_2$, $\dots$ of the normal bundle $\nu_g$ such that $\Sigma^{1_j}(f) = \cap_{i=1}^j s_i^{-1}(0)$, where $f=pr \circ g$, the map $pr:P\times \R^1 \to P$ being the projection.

The (positive) basis vector of $\R^1$ defines a constant vector field on $P\times \R^1$ that we call (upward directed) vertical and denote by $\up$. Project $\up$ into the normal bundle $\nu_g$ (considered as the quotient bundle $T(P\times \R^1)/dg(TM)$, and denote the obtained section by $s_1$. Note that the singularity set $\Sigma(f)$ of $f$ is precisely $s^{-1}_1(0)$, the zero set of the section $s_1$.

For a generic map $f$ the set $\Sigma(f)$ is a manifold of codimension $k+1$. Denote by $\nu_2$ the normal bundle of $\Sigma(f)$ in $M$. Note that $\nu_2 \cong \nu_g|_{\Sigma(f)}$: the tangent space of the section $s_1$ at the points of $\Sigma(f)$ is the graph of a linear isomorphism $\beta_2: \nu_2 \to \nu_g|_{\Sigma(f)}$.

In order to produce the section $s_2$ we first define a section $z_2$ of $\nu_2$ by projecting $\up$ into $\nu_2$ at the points of $\Sigma(f)$ (where $\up \in dg(TM)$, hence the definition makes sense). Applying the isomorphism $\beta_2$ we get a section $s_2' = \beta_2 \circ z_2$ of $\nu_g|_{\Sigma(f)}$. The section $s_2$ is defined as an arbitrary (continuous) extension of $s_2'$ to the entire $\nu_g$. Clearly $\Sigma^{1_2}(f)$ is the zero set of $z_2$, hence $\Sigma^{1_2}(f)=s_1^{-1}(0) \cap s_2^{-1}(0)$. We continue in the same fashion, producing sections $s_3$, $\dots$, $s_{r+1}$ such that $\Sig{j}(f)=\cap_{i=1}^{j} s_i^{-1}(0)$. In particular if $f$ is a $\Sig{r}$-map, then $\cap_{i=1}^{r+1}s_i^{-1}(0)=\emptyset$.

Note that the sections $s_2$, $\dots$, $s_r$ are not unique, but each one is chosen uniquely up to a contractible choice. The difference of any two possible choices of $s_2$ is an arbitrary section of $\nu_g$ that vanishes on $s_1^{-1}(0)$. The difference of any two possible choices of $s_3$ is arbitrary section of the normal bundle $\nu_g$ that vanishes on $s_1^{-1}(0) \cap s_2^{-1}(0)$ etc. Hence the collection of these sections defines a homotopically unique section $\alpha$ of the sphere bundle $p_r^S(g): S((r+1)\nu_g) \to M$.

Let $G_{k+1}$ denote the infinite Grassmann manifold of all $k+1$-planes in $\R^\infty$ and let $\varphi: M \to G_{k+1}$ be the map that induces $\nu_g$ from the universal bundle $\gamma_{k+1}$, furthermore let $\Phi: \nu_g \to \gamma_{k+1}$ be the corresponding fiberwise isomorphism. $\Phi$ induces a map $\Phi_S^r : S((r+1)\nu_g) \to S((r+1)\nu_{k+1})$. Consider the following diagram:
\begin{equation}\tag{$*_r$}\label{eq:inductionCD}\begin{split}
\xymatrix{
\zeta_g \ar[rrr] \ar@/_/[ddd]  \ar[dr] & & & \zeta_S \ar[ddd] \ar[dl] \\
& S((r+1)\nu_g) \ar[r]^{\Phi^r_S} \ar@/_/[d]_{p_r^S(g)} & S((r+1)\gamma_{k+1}) \ar[d] & \\
& M \ar[ur]_{\ \wt{\alpha_r}=\Phi^r_S\circ\alpha} \ar[r]_\varphi \ar@/_/[u]_{\alpha}& G_{k+1} & \\
\nu_g \ar[ur] \ar[rrr]^{\Phi} \ar@/_/[uuu]_{A} & & & \gamma_{k+1} \ar[ul] \\
}
\end{split}
\end{equation}
(since $\zeta_g = p_r^S(g)^*(\nu_g)$ and $\alpha$ is a section of the bundle map $p_r^S(g)$, the map $\alpha$ can be lifted into a vector bundle map $A: \nu_g \to \zeta_g$).
Its commutativity gives us that $\wt{\alpha_r}^*\zeta_S = \nu_g$. Thus we have obtained the proof of the following lemma:
\begin{lemma}
If $g$ is a generic immersion such that $f=pr\circ g$ is a $\Sig{r}$-map, then the normal bundle $\nu_g$ can be induced by $\wt{\alpha_r}$ from $\zeta_S$, which in turn is the pullback of $\gamma_{k+1} \to G_{k+1}$ to $S((r+1)\gamma_{k+1})$ by the projection $S((r+1)\gamma_{k+1}) \to G_{k+1}$. \qed
\end{lemma}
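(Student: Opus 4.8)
The plan is to build explicit sections of $\nu_g$ whose successive common zero sets are the singularity strata $\Sigma^{1_j}(f)$, to package them into a homotopically essentially unique section of the sphere bundle $S((r+1)\nu_g)\to M$, and then to compose with the classifying map of $\nu_g$ and read off $\wt{\alpha_r}^*\zeta_S\cong\nu_g$ from the universal property of $\gamma_{k+1}$.

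First I would use the vertical field $\up$ on $P^{n+k}\times\R^1$: projecting it into the quotient bundle $\nu_g=T(P\times\R^1)/dg(TM)$ produces a section $s_1$, and since $x\in\Sigma(f)$ precisely when $\up\in dg(T_xM)$, the zero set of $s_1$ is exactly $\Sigma(f)$. For generic $g$ this is a codimension-$(k+1)$ submanifold of $M$, and along it the tangent plane of $s_1$ is the graph of a linear isomorphism $\beta_2\colon\nu_2\to\nu_g|_{\Sigma(f)}$, $\nu_2$ being the normal bundle of $\Sigma(f)$ in $M$. This graph isomorphism is the key local input: project $\up$ into $\nu_2$ to obtain a section $z_2$ (legitimate because $\up\in dg(TM)$ along $\Sigma(f)$), transport it by $\beta_2$, extend the result arbitrarily over $M$ and call it $s_2$; then $\Sigma^{1_2}(f)=s_1^{-1}(0)\cap s_2^{-1}(0)$. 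Iterating, one gets sections $s_1,\dots,s_{r+1}$ of $\nu_g$ with $\Sigma^{1_j}(f)=\bigcap_{i=1}^{j}s_i^{-1}(0)$ for every $j\le r+1$.

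Next I would observe that each $s_j$ is determined up to a contractible family of choices: two admissible $s_j$ differ by an arbitrary section of $\nu_g$ vanishing on $\bigcap_{i<j}s_i^{-1}(0)$, and such sections form a linear, hence contractible, subspace. Because $f$ is a $\Sig{r}$-map, $\bigcap_{i=1}^{r+1}s_i^{-1}(0)=\varnothing$, so the tuple $(s_1,\dots,s_{r+1})$ is nowhere zero and, after normalization, yields a section $\alpha$ of $p^S_r(g)\colon S((r+1)\nu_g)\to M$ that is well defined up to contractible choice. Then I would take a classifying map $\varphi\colon M\to G_{k+1}$ for $\nu_g$ with fiberwise isomorphism $\Phi\colon\nu_g\to\gamma_{k+1}$; it induces $\Phi^r_S\colon S((r+1)\nu_g)\to S((r+1)\gamma_{k+1})$ covering $\varphi$, and we set $\wt{\alpha_r}=\Phi^r_S\circ\alpha$. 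Since $\alpha$ is a section of $p^S_r(g)$ and $\Phi^r_S$ covers $\varphi$, we get $pr_S\circ\wt{\alpha_r}=\varphi$; and since $\zeta_S=pr_S^*\gamma_{k+1}$, this gives $\wt{\alpha_r}^*\zeta_S=(pr_S\circ\wt{\alpha_r})^*\gamma_{k+1}=\varphi^*\gamma_{k+1}=\nu_g$. Concretely this is the commutativity of the diagram $(*_r)$, in which $\alpha$ additionally lifts to a bundle map $A\colon\nu_g\to\zeta_g=p^S_r(g)^*\nu_g$.

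The step I expect to be the main obstacle is the iterated construction of $s_2,\dots,s_{r+1}$ together with the zero-locus identities $\Sigma^{1_j}(f)=\bigcap_{i\le j}s_i^{-1}(0)$ and the contractibility of the choices at each stage: this rests on Morin's local normal forms for $\Sigma^{1_j,0}$ germs and requires checking at every step that along the previously constructed stratum the section $s_{j-1}$ is transverse to zero in the fibre direction, so that the graph isomorphism $\beta_j$ is available and $\up$ genuinely lies in $dg(TM)$ there. Granting this local analysis, the passage to the sphere bundle and the diagram chase yielding $\wt{\alpha_r}^*\zeta_S\cong\nu_g$ are formal.
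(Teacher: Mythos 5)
Your proposal is correct and follows essentially the same route as the paper: projecting the vertical field $\up$ to obtain $s_1$, using the graph isomorphism $\beta_2$ (and its iterates) to produce $s_2,\dots,s_{r+1}$ with $\Sigma^{1_j}(f)=\bigcap_{i\le j}s_i^{-1}(0)$, noting the contractibility of the choices, assembling the nowhere-vanishing tuple into a section $\alpha$ of $S((r+1)\nu_g)$, and reading off $\wt{\alpha_r}^*\zeta_S\cong\nu_g$ from the commutativity of the classifying diagram. The paper's argument is exactly this, so no further comparison is needed.
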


By the remark above about the contractible choices of the sections $s_2$, $\dots$, $s_{r+1}$ we have seen that $\alpha$ is homotopically unique and therefore the map $\wt{\alpha_r}$ is also homotopically unique. Hence $\nu_g$ can be pulled back from $\zeta_S$ in a homotopically well-defined way.

Applying the Pontryagin-Thom construction to the diagram \eqref{eq:inductionCD} we construct a map
\begin{equation}\tag{$**$}\label{eq:PrimPontryagin}
\Prim{r}(P) \to [P,\Omega\Gamma T\zeta_S]
\end{equation}
This map arises as follows: the cobordism class of an immersion $g: M \looparrowright P \times \R^1$ gives a map $SP \to \Gamma T\zeta_S$. Hence the map $f=pr\circ g$ gives a map $P \to \Omega \Gamma T\zeta_S$; this map is homotopically unique and its homotopy class is the same for any representative of the cobordism class of $g$; hence also for that of $f$. Let us denote the classifying space for prim $\Sig{r}$-cobordism by $\overline X_r$. The map \eqref{eq:PrimPontryagin} is induced by a (homotopically unique) map $\theta_r:\overline X_r \to \Omega\Gamma T\zeta_S$ between the classifying spaces. In \cite{GT} we have shown that there is a fibration $\overline X_{r-1} \to \overline X_r \overset{p_r}{\to} \Gamma S^r T\left((r+1)\gamma_k\right)$ and the map $p_r$ induces the forgetful map that sends the cobordism class of a $\Sig{r}$-map to the cobordism class of the immersed top singularity stratum. Note that the base space can be rewritten as $\Gamma S^r T\left((r+1)\gamma_k\right) = \Omega\Gamma T \left( (r+1) (\gamma_k \oplus \varepsilon^1)\right)$ .
 Hence there is a long exact sequence
\begin{align*}
\dots &\to \Prim{r-1}(SP) \to \Prim{r}(SP) \to [SP,\Omega\Gamma T(r+1)\gamma_{k+1}] \to\\
&\to \Prim{r-1}(P) \to \Prim{r}(P) \to [P,\Omega\Gamma T(r+1)\gamma_{k+1}] \to 0
\end{align*}

We now try to obtain an analogous exact sequence for the functor on the right-hand side of \eqref{eq:PrimPontryagin}.
\begin{claim}\label{claim:cofibration}
There is a cofibration $$
T\zeta_S|_{S(r\gamma_{k+1})} \subset T\zeta_S \to T(r+1)(\gamma_k\oplus \varepsilon^1).
$$
\end{claim}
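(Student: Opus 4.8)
The plan is to realize the claimed cofibration as the Thom space of the elementary excision that compares the sphere bundle $S((r+1)\gamma_{k+1})$ with the subbundle $S(r\gamma_{k+1})$ of those configurations whose last coordinate vanishes, carrying the bundle $\zeta_S$ along for the ride. Write $S_{r+1}=S((r+1)\gamma_{k+1})$ and let $S_r=S(r\gamma_{k+1})\subset S_{r+1}$ be the inclusion $(v_1,\dots,v_r)\mapsto(v_1,\dots,v_r,0)$. This is a closed cofibration and $\zeta_S$ is a vector bundle, so $T(\zeta_S|_{S_r})\hookrightarrow T\zeta_S$ is a cofibration; the entire content is therefore to identify the cofiber $T\zeta_S/T(\zeta_S|_{S_r})=D\zeta_S/\bigl(S\zeta_S\cup D\zeta_S|_{S_r}\bigr)$ with $T\bigl((r+1)(\gamma_k\oplus\varepsilon^1)\bigr)$, where $D\zeta_S$ and $S\zeta_S$ denote the disk and sphere bundles of $\zeta_S$.

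First I would split $S_{r+1}=A\cup B$ according to the norm $t=|v_{r+1}|$ of the last coordinate: $A=\{t\le 1/\sqrt2\}$, $B=\{t\ge 1/\sqrt2\}$, $A\cap B=\{t=1/\sqrt2\}$. The piece $A$ contains $S_r$ and deformation retracts onto it by pushing $v_{r+1}$ to $0$; since $\zeta_S=pr_S^*\gamma_{k+1}$ is pulled back from $B(k+1)$ and the retraction fixes all base points, it lifts canonically to a fibrewise deformation retraction of $D\zeta_S|_A$ onto $D\zeta_S|_{S_r}$. Hence $T(\zeta_S|_{S_r})\hookrightarrow T(\zeta_S|_A)$ is a homotopy equivalence and a cofibration, so the part of the cofiber lying over $A$, namely $T(\zeta_S|_A)/T(\zeta_S|_{S_r})$, is contractible. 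Writing the cofiber as the pushout of its restrictions to $A$ and to $B$ along its restriction to $A\cap B$ and collapsing the contractible $A$-part (a cofibration), I obtain $T\zeta_S/T(\zeta_S|_{S_r})\simeq D\zeta_S|_B/\bigl(S\zeta_S|_B\cup D\zeta_S|_{A\cap B}\bigr)$.

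Next I would identify the right-hand side. On $B$ the last coordinate is nonzero, so $(v_1,\dots,v_{r+1})\mapsto v_{r+1}/|v_{r+1}|$ is defined there and exhibits $B$ as the disk bundle of $r\cdot\eta$ over $S(\gamma_{k+1})$, where $\eta:=\gamma_{k+1}|_{S(\gamma_{k+1})}$; under this identification $A\cap B$ is the boundary sphere bundle $S(r\eta)$, and $\zeta_S|_B$ is the pullback of $\eta$. Using the standard fibrewise homeomorphism $D(r\eta)\times_{S(\gamma_{k+1})}D(\eta)\cong D((r+1)\eta)$, which carries $\bigl(D(r\eta)\times S(\eta)\bigr)\cup\bigl(S(r\eta)\times D(\eta)\bigr)$ onto $S((r+1)\eta)$, the quotient above becomes precisely $T((r+1)\eta)$. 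Finally, the classical identification of the sphere bundle of a universal bundle, $S(\gamma_{k+1})\simeq B(k)$ with $\gamma_{k+1}|_{S(\gamma_{k+1})}\cong\gamma_k\oplus\varepsilon^1$ (equally valid in the orthogonal, oriented and symplectic cases), turns $T((r+1)\eta)$ into $T\bigl((r+1)(\gamma_k\oplus\varepsilon^1)\bigr)$, finishing the identification of the cofiber.

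I expect the bulk of the work to be bookkeeping rather than conceptual: making the excision step precise — verifying that the inclusions involved are cofibrations, so that collapsing the contractible $A$-part is a homotopy equivalence, and that the identifications of bundles over $B$ with bundles over $S(\gamma_{k+1})$ are genuine vector bundle maps compatible with $\zeta_S$. A further, minor, point is that $B(k+1)$ is not compact, so the convenient "one-point compactification of the total space" picture of a relative Thom space must be replaced throughout by the honest disk-modulo-sphere quotients written above (or one passes to skeleta), which is routine in the present setting.
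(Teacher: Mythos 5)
Your argument is correct and is essentially the paper's proof spelled out in full: the paper invokes a general ``trivial lemma'' (Lemma \ref{lemma:cofibration}) about excising a tubular neighbourhood $V$ of a submanifold $A\subset N$ carrying a bundle $\xi$, applied with $N=S((r+1)\gamma_{k+1})$, $A=S(\gamma_{k+1})$ and $\nu=r(\gamma_k\oplus\varepsilon^1)$, and your decomposition into the region $\{t\le 1/\sqrt2\}$ retracting onto $S(r\gamma_{k+1})$ and the disk bundle $\{t\ge 1/\sqrt2\}$ over $S(\gamma_{k+1})$ is exactly that excision, with the identification $D(r\eta)\times_{S(\gamma_{k+1})}D(\eta)\cong D((r+1)\eta)$ supplying the Thom space $T(\nu\oplus\xi_A)$. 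No substantive difference from the paper's route.
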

To see that Claim \ref{claim:cofibration} holds, we utilize a trivial lemma:
\begin{lemma}\label{lemma:cofibration}
Let $N$ be a manifold, $A\subset N$ be a submanifold with a tubular neighbourhood $V$ and normal bundle $\nu$, and let $\xi$ be a vector bundle over $N$. Denote by $\xi_A$ and $\xi_{N\setminus V}$ the restrictions of $\xi$ to $A$ and $N\setminus V$, respectively. Then there is a cofibration of Thom spaces
$$
T\xi_{N\setminus V} \to T\xi \to T(\nu \oplus \xi_A).
$$\qed
\end{lemma}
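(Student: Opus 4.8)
The plan is to model all three Thom spaces by disk-bundle constructions and to exhibit the third as the cofibre (quotient) of the inclusion of the first into the second. First I would fix a Euclidean metric on $\xi$, so that $T\xi = D(\xi)/S(\xi)$, and a metric on $\nu$ compatible with the tubular neighbourhood, so that the closure $\overline V$ of $V$ is the closed disk bundle $D(\nu)$ with boundary $\partial\overline V\cong S(\nu)$ and $N = \overline V \cup_{\partial\overline V}(N\setminus V)$. The inclusion $D(\xi|_{N\setminus V})\hookrightarrow D(\xi)$ carries $S(\xi|_{N\setminus V})$ into $S(\xi)$ and thus induces the map $T\xi_{N\setminus V}\to T\xi$; this map is a cofibration because, choosing a CW structure on $N$ in which $N\setminus V$ is a subcomplex and lifting it to the total spaces of the disk bundles, it becomes the inclusion of a subcomplex. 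Hence it suffices to identify the quotient $T\xi/T\xi_{N\setminus V}$ with $T(\nu\oplus\xi_A)$.

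To compute the quotient, note that the image of $T\xi_{N\setminus V}$ under the collapse map $D(\xi)\to D(\xi)/S(\xi)=T\xi$ is the image of the set $D(\xi|_{N\setminus V})\cup S(\xi)$, so $T\xi/T\xi_{N\setminus V} = D(\xi)/(D(\xi|_{N\setminus V})\cup S(\xi))$. Applying $\pi^{-1}$ to the decomposition $N=\overline V\cup_{\partial\overline V}(N\setminus V)$ gives $D(\xi)=D(\xi|_{\overline V})\cup_{D(\xi|_{\partial\overline V})}D(\xi|_{N\setminus V})$, and collapsing the second piece together with the whole sphere bundle leaves $D(\xi|_{\overline V})/(D(\xi|_{\partial\overline V})\cup S(\xi|_{\overline V}))$. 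The tubular neighbourhood projection $p:\overline V\to A$ identifies $\xi|_{\overline V}$ with $p^*\xi_A$, so a point of $D(\xi|_{\overline V})$ is a triple $(a,v,w)$ with $a\in A$, $v\in D(\nu_a)$, $w\in D((\xi_A)_a)$; the subspace being collapsed is precisely $\{|v|=1\}\cup\{|w|=1\}$, i.e.\ the fiberwise boundary of the bundle over $A$ whose fibre at $a$ is the ``square'' $D(\nu_a)\times D((\xi_A)_a)$.

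Finally I would observe that collapsing this fiberwise boundary yields the bundle over $A$ with fibre $\bigl(D(\nu_a)\times D((\xi_A)_a)\bigr)/\partial\cong S^{\dim\nu+\dim\xi_A}$, the one-point compactification of $\nu_a\oplus(\xi_A)_a$, so the result is by definition $T(\nu\oplus\xi_A)$ --- modulo replacing the round-disk model of the Thom space by the square model $D(\nu)\times_A D(\xi_A)$, which does not change the homeomorphism type. This last identification (independence of the Thom space from the shape of the fiberwise disk used to build it), together with the verification that the quotient in the previous paragraph really is the cofibre, is the only point that needs a word of justification; both are routine, which is exactly why the lemma is called trivial. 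If one wishes to avoid metrics entirely, the same reasoning runs with one-point compactifications of total spaces in place of disk-bundle quotients, at the cost of having to argue the cofibration property slightly less directly.
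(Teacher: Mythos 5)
Your proof is correct. The paper in fact offers no argument at all for this lemma --- it is announced as ``a trivial lemma'' and stated with the end-of-proof symbol attached directly to the statement --- and the disk-bundle computation you give (realize $T\xi_{N\setminus V}\to T\xi$ as a subcomplex inclusion, collapse $D(\xi|_{N\setminus V})\cup S(\xi)$ inside $D(\xi)$, and identify what remains over $\overline V\cong D(\nu)$ with the fiberwise product $D(\nu)\times_A D(\xi_A)$ modulo its fiberwise boundary, i.e.\ with $T(\nu\oplus\xi_A)$) is exactly the standard argument the authors are taking for granted, including the routine point about the square versus round model of the fiberwise disk.
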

Claim \ref{claim:cofibration} follows by applying Lemma \ref{lemma:cofibration} to $N=S\left( (r+1)\gamma_{k+1} \right)$, $A=S(\gamma_{k+1})$, $\xi=\zeta_S$ with $V=S\left( (r+1)\gamma_{k+1} \right) \setminus S(r\gamma_{k+1})$ and $\nu=r(\gamma_k \oplus \varepsilon^1)$.

Applying the functor $\Omega\Gamma$ to the cofibration of Claim \ref{claim:cofibration} we obtain a fibration
$$
\Omega\Gamma T\zeta_S \xrightarrow{\Omega\Gamma T \zeta_S|_{S(r\gamma_{k+1})}} \Omega\Gamma T\left( (r+1)(\gamma_k\oplus \varepsilon^1) \right).
$$
There are also natural maps
$$\theta_{r-1}: \overline X_{r-1} \to \Omega\Gamma T \zeta_S|_{S(r\gamma_{k+1})}\text{ and}$$
$$\theta_r: \overline X_r \to \Omega\Gamma T\zeta_S$$
that correspond to the natural transformation of functors
$$\Prim{r-1}(\cdot) \to [\cdot, \Omega\Gamma T \zeta_S|_{S(r\gamma_{k+1})}]\text{ and}$$
$$\Prim r(\cdot) \to [\cdot,\Omega\Gamma T\zeta_S]$$
given by \eqref{eq:PrimPontryagin}. We hence obtain the following map of fibrations:
$$
\xymatrix{
\overline X_{r-1} \ar[r]^{\theta_{r-1}} \ar[d] & \Omega \Gamma T\zeta_S|_{S(r\gamma_{k+1})} \ar[d]\\
\overline X_r \ar[r]^{\theta_r} \ar[d] & \Omega\Gamma T\zeta_S \ar[d]\\
\Omega\Gamma T\left((r+1)(\gamma_k\oplus \varepsilon^1)\right) \ar[r]^{\cong} & \Omega\Gamma T\left((r+1)(\gamma_k\oplus\varepsilon^1)\right)\\
}
$$
We show that this diagram commutes homotopically by proving that the diagram of the induced natural maps between the corresponding functors commutes. The commutativity of the top square is obvious. In the bottom square, the normal structure encoded in $(r+1)(\gamma_k\oplus\varepsilon^1)$ on the left is that of the splitting of the normal bundle of $\Sigma^{1_r,0}$ into $r+1$ bundles canonically isomorphic to the restriction of the normal bundle of the immersion lift, while the normal structure on the right encodes the common zero sets of the $r+1$ sections $s_j$. While the structure on the right may be twisted with respect to that on the left by isomorphisms provided by the sections themselves, one can still recover one normal structure from the other uniquely up to homotopy.
\end{proof} 

\begin{proof}[Proof of Theorem \ref{thm:fibration}]
{\it a)} Consider the vector bundle $\gamma_{k+1}$ pulled back to the disc bundle $D\left((r+1)\gamma_{k+1}\right)$ by the projection. Note that the Thom space of this bundle is homotopy equivalent to $T\gamma_{k+1}$, while the total space of its disc bundle is that of the disc bundle $D\left((r+2)\gamma_{k+1}\right)$. Recall that after identifying $\gamma_{k+1}$ with this pullback to the disc bundle the vector bundle $\zeta_S$ is the restriction $\gamma_{k+1}|_{S((r+1)\gamma_{k+1})}$, and notice that $T\zeta_S \to T\gamma_{k+1} \to T\left((r+2)\gamma_{k+1}\right)$ is a cofibration by Lemma \ref{lemma:cofibration}. By applying the functor $\Omega\Gamma$ to it, we obtain the fibration
$$
\Omega\Gamma T\zeta_S \to \Omega\Gamma T\gamma_{k+1} \to \Omega\Gamma T\left((r+2)\gamma_{k+1}\right).
$$
It is well-known (see e.g. \cite{MosherTangora}) that when one turns the inclusion $\Omega\Gamma T\zeta_S \to \Omega\Gamma T\gamma_{k+1}$ of the fiber into a fibration, its fiber will be the loop space of the base, $\Omega^2\Gamma T\left((r+2)\gamma_{k+1}\right)$. This fibration $\Omega\Gamma T\zeta_S = \overline{X_r} \xrightarrow{\Omega^2\Gamma T\left((r+2)\gamma_{k+1}\right)} \Omega\Gamma T\gamma_{k+1}$ is the one stated by Theorem \ref{thm:fibration} $a)$.
\par
{\it b)} When $k$ is odd and we are in the oriented setting, the vector bundle $\gamma_{k+1}^{SO}$ has a nonvanishing Euler class and so does the vector bundle $(r+1)\gamma_{k+1}^{SO}$ as well, hence (using the Gysin sequence) we obtain that the projection $pr: S((r+1)\gamma^{SO}_{k+1}) \to BSO(k+1)$ induces an epimorphism in cohomology with rational coefficients (here we use the fact that $H^*(BSO(k+1))$ has no zero divisors as a subring of the polynomial ring $H^*(BT)$ with $T$ the maximal torus of $BSO(k+1)$). Consequently so does the induced map $T\zeta_S \to T\gamma^{SO}_{k+1}$ of Thom spaces, therefore the cofibration  $T\zeta_S \to T\gamma^{SO}_{k+1} \to T((r+2)\gamma^{SO}_{k+1})$ splits homologically and so the long exact sequence of the fibration $\Gamma T\zeta_S \to \Gamma T\gamma^{SO}_{k+1} \to \Gamma T((r+2)\gamma^{SO}_{k+1})$ in homotopy -- which coincides with the long exact sequence of the original cofibration in stable homotopy -- splits rationally as well. Since all the involved spaces are H-spaces and thus rationally products of Eilenberg-MacLane spaces, this implies that $\Omega\Gamma T\gamma^{SO}_{k+1} \cong_\Q \Omega\Gamma T\zeta_S \times  \Omega\Gamma T\left((r+2)\gamma^{SO}_{k+1}\right) =\overline X^{SO}_r \times  \Gamma T\left((r+2)\gamma^{SO}_{k+1}\right)$ as claimed.
\par
{\it c)} When $k$ is even (and we are still in the oriented setting), the vector bundle $\gamma_{k+1}^{SO}$ has vanishing Euler class and so does the vector bundle $(r+1)\gamma_{k+1}^{SO}$ as well, hence the projection $pr: S((r+1)\gamma^{SO}_{k+1}) \to BSO(k)$ induces a monomorphism in cohomology with rational coefficients. Consequently so does the induced map $T\zeta_S \to T\gamma^{SO}_{k+1}$ of Thom spaces. Extending the cofibration $T\zeta_S \to T\gamma^{SO}_{k+1} \to T((r+2)\gamma^{SO}_{k+1})$ to form the Puppe sequence
\begin{align*}
T\zeta_S \to T\gamma^{SO}_{k+1} &\to T((r+2)\gamma^{SO}_{k+1}) \to\\
&\to ST\zeta_S \to ST\gamma^{SO}_{k+1} \to ST((r+2)\gamma^{SO}_{k+1}) \to \dots
\end{align*}
we observe that the induced map $H^*\left(ST\left((r+2)\gamma^{SO}_{k+1}\right)\right) \to H^*\left( ST\gamma^{SO}_{k+1} \right)$ is also zero, therefore the cofibration  $T\left((r+2)\gamma^{SO}_{k+1}\right) \to ST\zeta_S \to ST\gamma^{SO}_{k+1}$ splits homologically and so the long exact sequence of the fibration
$$\Omega^2\Gamma T\left((r+2)\gamma^{SO}_{k+1}\right) \to \Omega^2\Gamma ST\zeta_S \to \Omega^2\Gamma ST\gamma^{SO}_{k+1}$$
in homotopy splits rationally as well. But since $\Omega^2\Gamma ST\zeta_S = \Omega\Gamma T\zeta_S$ and $\Omega^2\Gamma ST\gamma^{SO}_{k+1} = \Omega\Gamma T\gamma_{k+1}$ are H-spaces, we have
$$\overline X_r =\Omega\Gamma T\zeta_S \cong_\Q \Omega \Gamma T\gamma_{k+1} \times \Omega^2\Gamma T\left((r+2)\gamma_{k+1}\right)$$
as claimed.
\end{proof}

\section{Appendix: The generator of $\pi^s(3)$ via singularity theory}

Consider the normal form of a Whitney umbrella map $U:\R^4 \to \R^7$ given by the coordinate functions
\begin{align*}
(x,t_1,t_2,t_3) &\mapsto (y_1,y_2, y_3,z_1,z_2,z_3,z_4)\\
y_m &= t_m \qquad m=1,2,3\\
z_m &= t_mx \qquad m=1,2,3\\
z_4 &= x^2
\end{align*}
By adding the eighth coordinate function $z_5=x$ we lift this map to an embedding $\tilde U:\R^4 \to \R^8$. Consider the restriction of $\tilde U$ to $\tilde D^4 = U^{-1}(D^7)$ and denote by $\tilde S^3$ the boundary of $\tilde D^4$. Note that $\tilde D^4$ is diffeomorphic to the standard ball $D^4$. The map $\tilde U$ is an embedding, hence $\tilde U(\tilde D^4)$ is an embedded ball in $\R^8$. Its normal bundle admits a homotopically unique quaternionic line bundle structure. The direction of the added $8$th coordinate line $(z_5)$ is not tangent to $\tilde D^4$ at the points of $\tilde S^3$ (actually at any point of $\tilde D^4$ except for the origin). Hence the classifying map sends this subset to a ball neighbourhood of $\HH P^0 \subset \HH P^1$ and induces the normal bundle of $\tilde U$ from $\gamma^H_1$ in the following manner. Let $v$ denote the image of the upward-pointing vector $\partial/\partial z_5$ in the normal bundle of $\tilde D^4$ under the natural projection $T\R^8 \to T\R^8/d\tilde U(T\tilde D^4) = \nu_{\tilde U}$ . The classifying map sends $v$ to a fixed section $s$ of $\gamma^H_1$ (that does not vanish on the given neighbourhood) and extends this bundle map to respect the quaternionic structure (see the proof of the second Claim in \cite[Section 3]{sing2}). In particular, the trivialization $(s,is,js,ks)$ of $\gamma^H_1$ over $\HH P^0$ that gives the generator of $\pi^s(0) = \Z$ is pulled back to the trivialization $(v,iv,jv,kv)$ of the normal bundle of $\tilde U$, and the element in $\pi^s(3) \approx \Z_{24}$ represented by this framed manifold $\tilde S^3$ is the image of the differential $d^1_{2,6}: E^1_{2,6} \cong \Z \to E^1_{1,6} = \pi^s(3) \cong \Z_{24}$ in the spectral sequence of subsection \ref{section:quaternion} evaluated on a generator of $E^1_{2,6} \cong \Z$. Since the differential $d^1_{2,6}$ is surjective, it sends the generator of $E^1_{2,6}$ to a generator of $E^1_{1,6} = \pi^s(3) \cong \Z_{24}$.

\end{document}